\titleformat*{\subsubsection}{\normalfont} 
\newtheorem{theorem}{Theorem}
\newtheorem{proposition}[theorem]{Proposition}
\newtheorem{lemma}[theorem]{Lemma} 
\title{Fibonacci groups, $F(2,n)$, are hyperbolic for $n$ odd and $n \geq 11$}
\author{Christopher P. Chalk}
\date{}
\begin{document}
\maketitle


\begin{abstract}

We prove that the Fibonacci group, $F(2, n)$, for $n$ odd and $n \geq 11$ is hyperbolic. We do this by  applying a curvature argument to an arbitrary van Kampen diagram of $F(2,n)$ and show that it satisfies a linear isoperimetric inequality. It then follows that $F(2, n)$ is hyperbolic.
\end{abstract}


\section{Introduction} 
The Fibonacci groups, $F(2, n)$, are $n$ generator $n$ relation groups defined symmetrically as 
\begin{align}
<x_1, x_2, . . . , x_n | x_i x_{i+1} = x_{i+2} \  i = 1,2, . . . ,n>
\end{align}
where the suffixes are taken mod $n$.
For $n = 2m$ and $m \geq 4$, these groups have been shown to be fundamental groups of certain hyperbolic manifolds. See \cite{hkm}. The automatic groups software package, KBMAG, \cite{kbm}, can show that $F(2,9)$ is hyperbolic. See \cite{holt}, section 13.4.  The group $F(2,6)$ is known to contain a free abelian subgroup of rank 2 and so is not  hyperbolic. All other groups, when $n < 8$, are finite. See \cite{dlj} section 16.4.  We prove, here, that $F(2,n)$ for $n$ odd and $n \geq 11$ is hyperbolic. 
The proof is in two parts. The first part, in Lemmas 1 to 4, classifies the vertices of any van Kampen diagram, K, of $F(2,n)$ into various types based on the coloured edge rules defined in \cite{cpc}. These various types of vertex are then grouped into a hierarchy of layers. Lemma 5 shows that any vertex in a lower layer is connected, by a single edge, to a vertex in a higher layer. 
The second part of the proof applies a curvature calculation to K. This calculation is based on the algorithm, $RSym$, described in \cite{hyp}. A quantity, equal to the number of edges less the number of vertices in K and called $curvature$, is shared equally among all the vertices of K. Proposition 6 describes how, using the layer connections established by Lemma 5, the curvature allocated to vertices can be distributed to every interior face corner of K, so that each one receives a quantity  $\geq 1/3 + 1/960n^4$. Theorem 7 shows that this implies that K satisfies a linear isoperimetric inequality which, in turn, implies that $F(2, n)$ is hyperbolic. 

\section{Classifying vertex types and arranging them into layers}
In this section we show how the symmetric nature of the relations of $F(2,n)$ implies that we can dispense with edge labelling in van Kampen diagrams and use the notion of $angles$ between edges instead. The edge colour definitions of blue, green and yellow are then introduced. Lemmas 1 and 2 prove some consequences of these definitions when colouring van Kampen diagrams. We then discuss the spherical diagram SDn and show how an arbitrary van Kampen diagram, K, can be considered \textit{spherically reduced} in that it does not contain a subdiagram matching a subdiagram in SDn which contains more than half the faces in SDn. Lemma 3 shows how being \textit{spherically reduced} limits the number of yellow edges in K. We then classify, using Lemma 4, the types of interior vertices that can occur in K and arrange these into layers. Lemma 5 then shows that any vertex in a lower layer is connected by a single edge to a vertex in a higher layer.

\subsection{Angles} 
The paper \cite{cpc} studies van Kampen diagrams of $F(2,n)$ for the values of $n$ odd and $n \geq 9$. In such diagrams, faces have the labelled form shown in Figure~\ref{triangle}.  A, B and C are vertices, each attached to a corner of the face ABC. At vertex B, the edges BA and BC with arrows pointing away from B are called \textit{outward edges} of B. The arrow on edge CA confers an order to this pair of outward edges so that BA becomes the first outward edge and BC the second outward edge of the pair. Similarly, the edges CA and CB are called \textit{inward edges} of C, where CB and CA are, respectively, the first and second inward edges of C. 

In moving from the first outward edge BA to the second outward edge BC attached to B, the edge label changes from $x_{i-1}$ to $x_{i+1}$. We can represent this label change by assigning an \textit{angle} +2 between the first and second outward edges and an angle $-2$ between the second and first outward edges. Similarly the angle between the first and second inward edges, CB and CA, of C, where the label changes from $x_{i+1}$ to $x_{i}$ is set to  $-1$, while the angle between the second and first inward edges is +1. The angle between the inward edge, AB, and the outward edge, AC, is +1, to reflect the label change from $x_{i-1}$ to $x_{i}$ We call this a \textit{transitional angle} since it involves a change from an inward to an outward edge. Similarly, the transitional angle between the outward edge AC and inward edge AB of A is given a value $-1$.

 We can assume that an arbitrary van Kampen diagram, K, is \textit{reduced} in the sense that it does not contain two identically labelled faces that share a common edge. This implies that the edges in a sequence of inward or in a sequence of outward edges, attached to a vertex in K, all have the same orientation and the same angle between each other. 
 
 When adding up the angles about an interior vertex, $v$, in a consistent direction (anticlockwise or clockwise), beginning and ending at the same edge, the \textit{transitional angles} necessarily cancel each other out. The angle sum, therefore, needs only to contain angles between consecutive edges where both edges are inward or both are outward. This angle sum must add up to $0 \mod n$.  We call this angle sum an \textit{angle sum solution} for $v$. Since $K$ is reduced, an angle sum solution cannot contain successive angles $+1-1, -1+1, +2-2$ or $-2+2$.
  
  If $v$ consists of $o$ outward edges followed by $i$ inward edges, then the number of terms in an angle sum solution is $o+i-2$. For example, the degree 5 vertex, V, shown in Figure~\ref{5v}, consists of a sequence of 3 inward edges followed by a sequence of 2 outward edges about V. The inward and outward edge sequences both have the same orientation, in the sense that the first inward edge is preceded by the last outward edge and the last inward edge is followed by the first outward edge. The angle sum solution of V consists of 3 terms and is $+1 + 1 - 2$ in the anticlockwise direction starting from and ending at the edge VD.

\begin{figure}[ht]
\centering
\includegraphics[width=8cm]{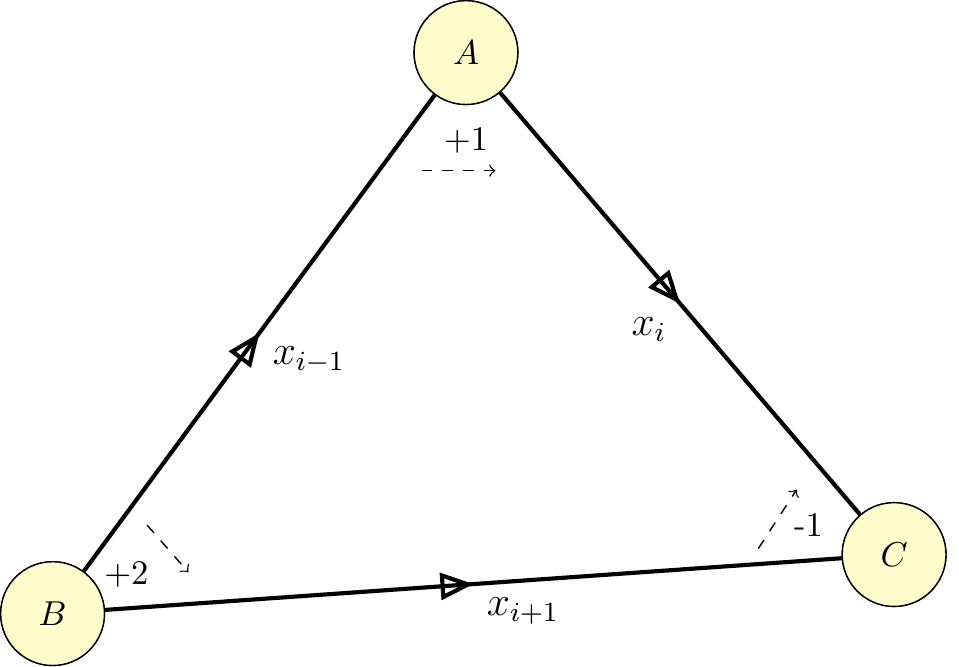}
\caption{}
\label{triangle}
\end{figure}
\newpage
\begin{figure}[ht]
\centering
\includegraphics[width=10 cm]{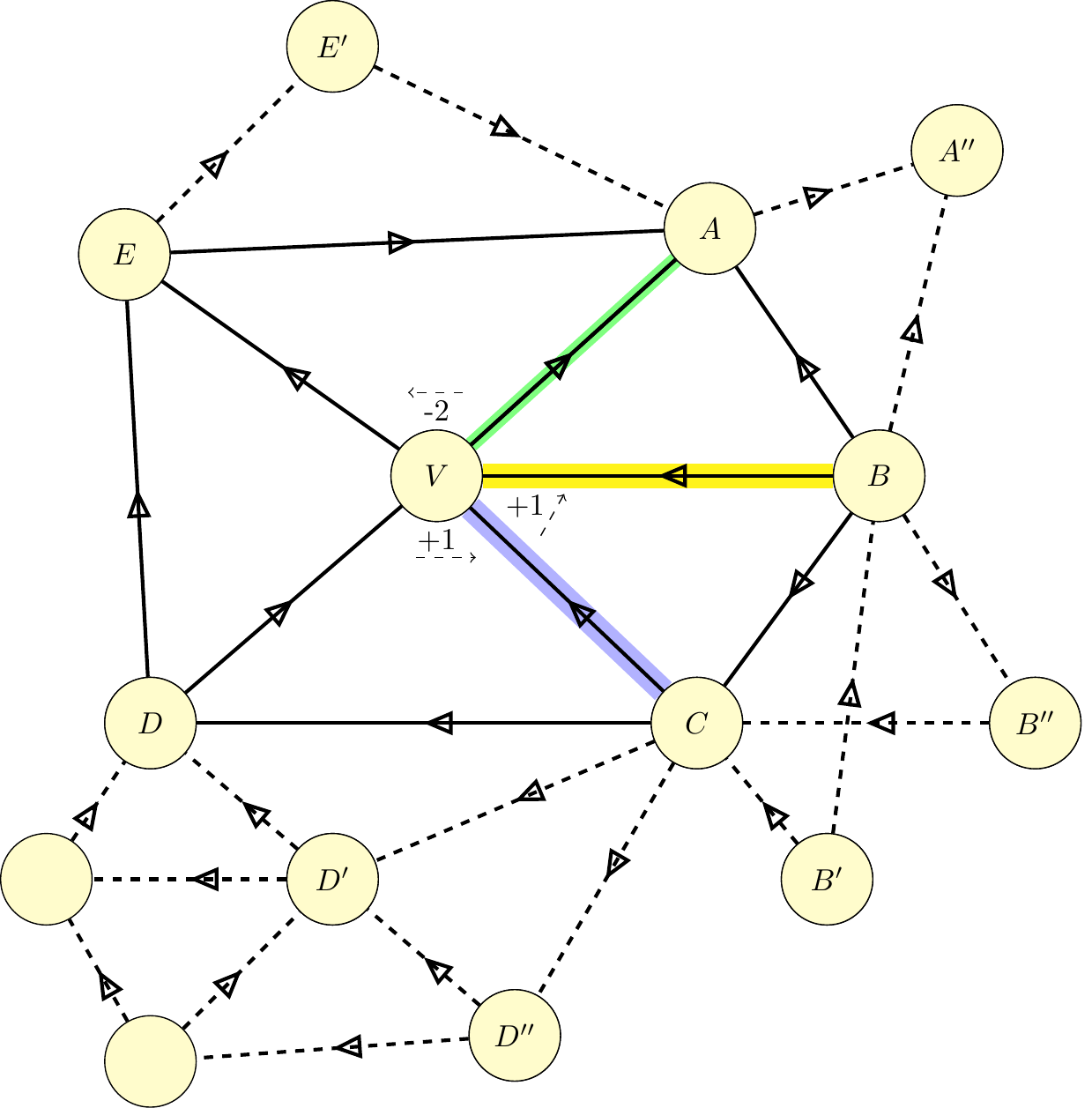}
\caption{Vertex, V, of degree 5}
\label{5v}
\end{figure}
\subsection{Coloured diagrams}
 \cite{cpc} defines 4 different colours for edges that can connect a degree 5 vertex to another vertex of a different type. Of these colours, red is reserved for the case $n$ = 9. So, for $n$ odd and $\geq 11$, we need only the three remaining colours, blue, green and yellow. With reference to Figure~\ref{5v}, these are defined as follows.
\begin{description}
\item[Blue] \hfill \\
\textit{The middle inward edge VC of V is coloured blue, if the first inward edge VB is the second outward edge in a sequence of outward edges attached to B.} 
\item[Green] \hfill \\  
\textit{The last outward edge, VA, of V is coloured green if the middle inward edge VC is not coloured blue and the vertex A is not an interior degree 5 vertex.} 
\item[Yellow] \hfill \\ 
\textit{The first inward edge, VB, of V is coloured yellow, if neither edges VA or VC are coloured green or blue, respectively, and the outward edge BV of vertex B is the middle outward edge of a sequence of 5 outward edges attached to B.} 
\end{description}
  
Lemmas 1 to 3, below, present consequences of the colour definitions that were also discussed in \cite{cpc}, Lemmas 4.1 to 4.5. 
\begin{lemma}
Every interior 5 vertex, $v$, is attached to exactly one coloured edge and this edge connects $v$ to a vertex which is not an interior 5 vertex.
\end{lemma}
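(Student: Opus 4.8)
The plan is to treat the two assertions separately: first that exactly one of the three colours is assigned to $v$, and second that the resulting coloured edge reaches a vertex which is not an interior degree 5 vertex. Throughout I use the fixed local picture of a degree 5 vertex from Figure~\ref{5v}, namely the three inward edges $VB$ (first), $VC$ (middle), $VD$ (last), the two outward edges $VE$ (first) and $VA$ (last), and the unique angle sum solution $+1+1-2\equiv 0\pmod n$. The ``at most one colour'' half is then immediate from the way the definitions are nested: blue is assigned to $VC$, green to $VA$ and yellow to $VB$, so the three colours sit on distinct edges; moreover green presupposes that $VC$ is not blue, and yellow presupposes both that $VC$ is not blue and that $VA$ is not green. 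Hence no two of the conditions can fire simultaneously, and the real content of the lemma is to show that at least one always does.

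For existence I would argue by cases on the blue and green conditions. If $VC$ is blue or $VA$ is green we are done, so suppose neither holds. Since green is defined to fire exactly when $VC$ is not blue and $A$ is not an interior degree 5 vertex, the simultaneous failure of blue and green forces $A$ to be an interior degree 5 vertex. I then have to deduce the yellow condition, that $BV$ is the middle edge of a run of exactly five outward edges at $B$. To do this I would analyse the configuration around the shared edges $VB$ and $VA$: the edge $VA$ is an inward edge of the degree 5 vertex $A$, while $VB$ is an outward edge of $B$ lying between the triangles $VAB$ and $VBC$ in the fan of faces about $V$. Transporting the edge label subscripts around these faces via the angle rules, and using that in a reduced diagram every edge in a run of outward edges at $B$ carries the same angle, I would pin down both the orientation of $B$'s edges adjacent to $BV$ and the length of the run, and read off that $BV$ occupies the middle position of a run of length exactly five. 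This supplies the yellow colour and completes existence.

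For the second assertion, the green and yellow cases are immediate. The green edge is $VA$, and the green definition explicitly demands that $A$ not be an interior degree 5 vertex; the yellow edge is $VB$, and the yellow condition places $BV$ in the middle of a run of five outward edges at $B$, so $B$ has degree larger than five. Only the blue case requires work: the blue edge is $VC$, connecting $v$ to $C$, and I must show $C$ is not an interior degree 5 vertex. Here I would run the analogous local analysis on the faces $VBC$ and $VCD$ meeting along $VC$, now exploiting the blue hypothesis that $BV$ is the second outward edge of a run at $B$, to force a configuration of edges at $C$ whose angle sum cannot equal the unique degree 5 solution $+1+1-2$, so that $C$ cannot be a degree 5 vertex.

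The routine part is the nesting argument for ``at most one colour''. The real obstacle is the local angle bookkeeping of the last two paragraphs: converting the qualitative hypotheses (the position of $BV$ in $B$'s outward run, and $A$ being degree 5) into exact numerical constraints on the subscripts around the shared faces, and in particular keeping track of the orientations at the various endpoints so that the $\pmod n$ angle arithmetic, together with the reduced and spherically reduced hypotheses, combines to pin the run length at $B$ to exactly five and to rule out a degree 5 configuration at $C$.
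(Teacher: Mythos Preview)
Your overall case split mirrors the paper's, but the mechanism you propose for the yellow step will not do the job. You plan to use ``$\pmod n$ angle arithmetic, together with the reduced and spherically reduced hypotheses'' to ``pin the run length at $B$ to exactly five''. Neither ingredient is the right one: the yellow condition only asks that $BV$ sit in the middle of \emph{some} five consecutive outward edges at $B$, not that the whole run have length five; angle sums constrain nothing until you traverse all the way around $B$, which you cannot; and spherical reduction is not available here at all---it enters the paper only at Lemma~3. The paper's argument is instead a two-line combinatorial deduction from the failed colour tests themselves. The failure of blue says precisely that $BV$ is not the second outward edge of $B$; since $BC$ already precedes $BV$, this forces an additional outward edge $BB''$ before $BC$. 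On the other side, $A$ being a 5~vertex with $AB$ its first inward edge forces the edge preceding $AB$ at $A$ to be the last outward edge $AA''$ of $A$, and the face on $A,B,A''$ then makes $BA''$ outward at $B$. Thus $BB'', BC, BV, BA, BA''$ are the five outward edges triggering yellow. Your sketch never names the not-blue hypothesis as the source of $BB''$; without it the count stops at four.

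A smaller omission: you take the shape of Figure~\ref{5v} for granted, but the paper opens its proof by establishing that every interior degree~5 vertex must look like this (ruling out all-inward and all-outward configurations and identifying $+1+1-2=0$ as the only remaining angle sum, hence three inward then two outward in the same orientation). That structural fact is what lets one speak of ``the first inward edge of $A$'' above, and it is also what drives the blue case: the paper shows $C$ fails to be a 5~vertex not by an angle-sum contradiction but because the inward pair $CB',CB$ and the outward pair $CV,CD$ at $C$ have opposite orientations, violating the same-orientation requirement just proved.
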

\begin{proof}

We firstly show that Figure~\ref{5v} represents the only possible kind of interior vertex of degree 5 in K. Suppose $v$ is an interior vertex of degree 5. If every edge attached to $v$ is an outward edge, then the angle sum is $2+2+2+2+2$ which does not add up to 0 mod $n$ for $n \geq 11$. We can similarly rule out the case that every edge attached to $v$ is inward. So, $v$ must be attached to a mixture of inward and outward edges and the angle sum solution involves 3 terms. $2 -1 - 1 = 0$ is then the only possible angle sum solution.  So, a vertex of degree 5 must consist of 3 inward edges followed by 2 outward edges in the same orientation. 

With reference to Figure~\ref{5v}, suppose the edge VC is coloured blue. The edges BC and BV must then be the first two outward edges of a sequence of outward edges attached to B. So, either BC is a boundary edge or the edge, BB$'$, preceding BC must be an inward edge of B. Then, we observe that the inward edge sequence CB$'$ and CB of vertex C has a different orientation to the outward edge sequence CV and CD of C. So, C is not a vertex of degree 5. 
Suppose VC cannot be coloured blue. There must then be an outward edge, BB$''$, of B that precedes the outward edge BC.  
Suppose that, as well as VC being uncoloured, the edge VA cannot be coloured green. Then A must be an interior vertex of degree 5 and the inward edge AB must be the first inward edge of 3 inward edges attached to A. So, the previous edge, AA$''$, to AB must be an outward edge of A. This would mean the BA$''$ is an outward edge of B and so, the outward edge BV of B is the middle of the 5 outward edges BB$''$, BC, BV, BA and BA$''$ and so should be coloured yellow. Clearly, B is, then, not an interior vertex of degree 5. 
\end{proof}

We call an interior vertex of degree 5 a \textit{5 vertex}. 

The following lemma collects together properties of the coloured edges as they relate to vertices which are not 5 vertices. 
\begin{lemma}
Let V be a vertex which is not a 5 vertex and which is attached to a sequence of inward edges, I, followed by a sequence of outward edges, O, then 
\renewcommand{\labelenumi}{\alph{enumi}}
\begin{enumerate}
\item The first outward edge of O and the first and last edges of I are uncoloured. \item Two consecutive edges of I cannot both be coloured green. \item If an edge is blue, then it is the last outward edge of O and is followed by the last inward edge of I.  \item If O contains both a blue and yellow edge, then the blue and yellow edges are separated by at least two uncoloured edges. \item If O has at least 5 edges then at least 4 of these are uncoloured. \item If VY is the last yellow edge in O, then the following edge, VX, connects V to a 5 vertex X. 
\end{enumerate}
\end{lemma}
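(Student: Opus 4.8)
The plan is to read each colour definition \emph{from the far end} of the coloured edge. By Lemma 1 every coloured edge joins a $5$ vertex to a vertex that is not a $5$ vertex, so I regard $V$ as the non-$5$ endpoint and ask what role the coloured edge plays at $V$. Reversing the orientation of an edge interchanges inward and outward: the blue edge (the middle inward edge of the adjacent $5$ vertex) and the yellow edge (its first inward edge) are therefore \emph{outward} at $V$ and lie in $O$, whereas the green edge (the last outward edge of the adjacent $5$ vertex) is \emph{inward} at $V$ and lies in $I$. This single observation already separates the admissible colours between $I$ and $O$ and is the backbone of all six parts.

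To locate a coloured edge precisely within $I$ or $O$, I would examine the two faces of $K$ on either side of it, using the same triangle-role bookkeeping (two-outward vertex, two-inward vertex, transitional vertex) as in the proof of Lemma 1. For green, the two faces flanking the last outward edge of the $5$ vertex place an inward edge of $V$ on \emph{both} sides of the green edge, so a green edge is never the first or last edge of $I$. For blue, the two faces flanking the middle inward edge force an inward edge on one side and an outward edge on the other, and, exactly the \emph{different orientation} noted in Lemma 1, the inward and outward runs at $V$ sit in the reverse sense to those at a $5$ vertex; this pins the blue edge as the last edge of $O$ with the last edge of $I$ cyclically following it, which is (c). For yellow, the defining clause says the edge is the middle of a run of five outward edges of $V$, so it carries two outward edges of $V$ before it and two after it.

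The six parts then follow. Part (a): the only inward colour is green, which by the above is never first or last in $I$, while the only outward colours are blue (last in $O$, by (c)) and yellow (which needs two outward predecessors), so neither can be the first edge of $O$. Part (c) is already done. For (d) and (e) I would combine the two pictures: the terminal position of blue and the centre-of-a-five-run position of yellow force coloured outward edges to be spread apart, and counting the uncoloured edges that each yellow and the terminal blue are obliged to carry on either side yields both the two-edge separation of (d) and the $\geq 4$ uncoloured bound of (e). Part (b) is handled by contradiction: if two consecutive inward edges were both green, the single face between them (at which $V$ is two-inward) would have to exhibit the last outward edges of two distinct $5$ vertices simultaneously, and the triangle-role constraints there, together with the guarding clause that green requires the middle inward edge not to be blue, are incompatible. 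For (f), I would analyse the five-run of outward edges around the last yellow edge $VY$: the face adjacent to $VY$ on the side of the next edge $VX$, together with the defining condition of yellow, forces $X$ to be an interior $5$ vertex.

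The main obstacle is orientation bookkeeping. Everything turns on how the first/second, and hence first/last, ordering of the inward and outward runs transforms as one crosses a coloured edge from the $5$ vertex to $V$; the signs flip, and the same-versus-different-orientation distinction from Lemma 1 is exactly where a careless argument goes wrong. Parts (b) and (f), where this orientation data must be combined with the guarding clauses of the colour definitions and with the reducedness of $K$ (no cancelling $+1\,-1$, $-1\,+1$, $+2\,-2$ or $-2\,+2$ in an angle sum), are where the genuine work lies; parts (a), (c) and (e) then fall out of the local pictures.
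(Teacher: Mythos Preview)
Your framework—reading each colour from the non-$5$ endpoint and sorting blue/yellow into $O$ and green into $I$—is exactly right, and it dispatches (a) and (c) as you say. Part (f) is also essentially the paper's argument once unpacked: the face $VYX$ has $V$ two-outward, hence $YX$ is the last outward edge of the $5$ vertex $Y$; since $VY$ is yellow, the green clause at $Y$ was \emph{not} triggered, so $X$ must be an interior $5$ vertex.

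The gap is in (d), and (e) inherits it. Positional bookkeeping alone only shows that a yellow edge needs two outward successors, so the closest it can sit to a terminal blue edge is the third-to-last slot—separated from blue by \emph{one} edge, not two. To push it one step further you must argue as in (f): if $CD'$ is yellow and $CV$ is blue with $CD$ between them, then $D'$ is a $5$ vertex and $D'D$ is its last outward edge; the face $CDV$ together with the blue configuration at $V$ forces the inward/outward runs at $D$ to sit in opposite orientations, so $D$ is not a $5$ vertex; hence $D'D$ should have been green, contradicting the yellow on $CD'$. Your sketch never invokes this green-contradiction step, and without it the bound in (d) (and therefore the count in (e)) is off by one.

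Part (b) has a similar issue. The guarding clause ``green requires the middle inward edge not to be blue'' lives at the $5$ vertex and plays no role here. The actual obstruction is an orientation mismatch at the \emph{neighbour between} the two candidate green edges: if $AV$ is green and $AE$ is the next inward edge of $A$, then $E$ acquires a pair of inward edges and a pair of outward edges in opposite orientation (trace the two faces at $V$ on either side of $VE$ together with the face $AEE'$), so $E$ cannot be a $5$ vertex and $AE$ cannot be green. Your ``single face would have to exhibit the last outward edges of two distinct $5$ vertices'' is heading toward the right picture but does not reach the contradiction; the faces on either side of a two-inward corner have their two-outward corners at \emph{different} vertices, so nothing is immediately incompatible at that face—it is the structure forced on $E$ that breaks.
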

\begin{proof}
 We prove each part separately.
 \renewcommand{\labelenumi}{\alph{enumi}}
\begin{enumerate}
\item 
This follows from the colour definitions. A yellow edge is the middle of 5 outward edges. A green edge is the middle of 3 inward edges. A blue edge is the last in a sequence of outward edges. So, a first outward edge, a first inward edge and last inward edge are uncoloured.  
\item Without loss of generality, we can let $I$ be the inward edges AB, AV, AE and AE$'$ attached to vertex A in Figure~\ref{5v}, where AV is coloured green. Then the vertex E has two inward edges ED and EV which are not in the same orientation as the outward edges EE$'$ and EA. So, E cannot be a 5 vertex and so AE is uncoloured. \item In Figure~\ref{5v}, if VC is blue, then CV is the last outward edge of C preceded by the outward edge CD and CB is the last inward edge of C preceded by the inward edge CB$'$ \item Without loss of generality, we can let the last 4 edges of $O$ be edges CD$''$, CD$'$, CD and CV in Figure~\ref{5v}, where CV is coloured blue. If CD$'$ were a yellow edge, the vertex D$'$ would be a 5 vertex as shown. Then the two inward edges, DD$'$ and DC, and the two outward edges, DE and DV, of D have opposite orientations. D is therefore not a 5 vertex. But then the second outward edge, D$'$D of D$'$ should be green. So, CD and CD$'$ must be uncoloured edges. \item The first and second outward edges of $O$ are uncoloured by definition. If the last outward edge of $O$ is blue, then, by d, above, the previous 2 edges are uncoloured. If the last outward edge is not blue then the last two outward edges of $O$ are uncoloured. Since the number of edges of $O$ is at least 5, these edges comprise 4 uncoloured edges in $O$. \item The edge, YX, joining the 5 vertex Y to the vertex X, is the second outward edge of Y. See, for example, the edge D$'$D in Figure~\ref{5v}. If X is not a 5 vertex then, by definition, YX should be coloured green and VY uncoloured contrary to the supposition. 
\end{enumerate}
\end{proof}
\subsection{The spherical diagram, SDn}
It is observed in \cite{cpc} that there exists a spherical diagram SDn for $F(2,n)$, $n$ odd, consisting of $4n$ faces. SDn consists of two vertices of degree $n$ which are connected by outward edges to $2n$ 5 vertices. See Figure~\ref{sdn}.

For any reduced van Kampen diagram, K, with boundary label w, we can replace any subdiagram of K which exactly matches a subdiagram S of SDn, by a subdiagram which matches the complement S$'$ of S in SDn. This can be used to minimize the number of faces in K. For example, Figure~\ref{diag25} matches the subdiagram in SD11, Figure~\ref{sdn}, whose boundary is marked by the 13 vertices labelled A to M. This subdiagram contains 25 faces and 6 yellow edges but it can be replaced by a subdiagram matching its complement in SD11 that has the same 13 edge boundary and contains 19 faces and just two yellow edges. See Figure~\ref{diag19}. After a finite number of such replacements, K will contain no subdiagram matching one in SDn that has more than $2n$ faces.  When K has this property, we say that K is \textit{spherically reduced}.

\begin{figure}[hb]
\centering
\includegraphics[width=10cm]{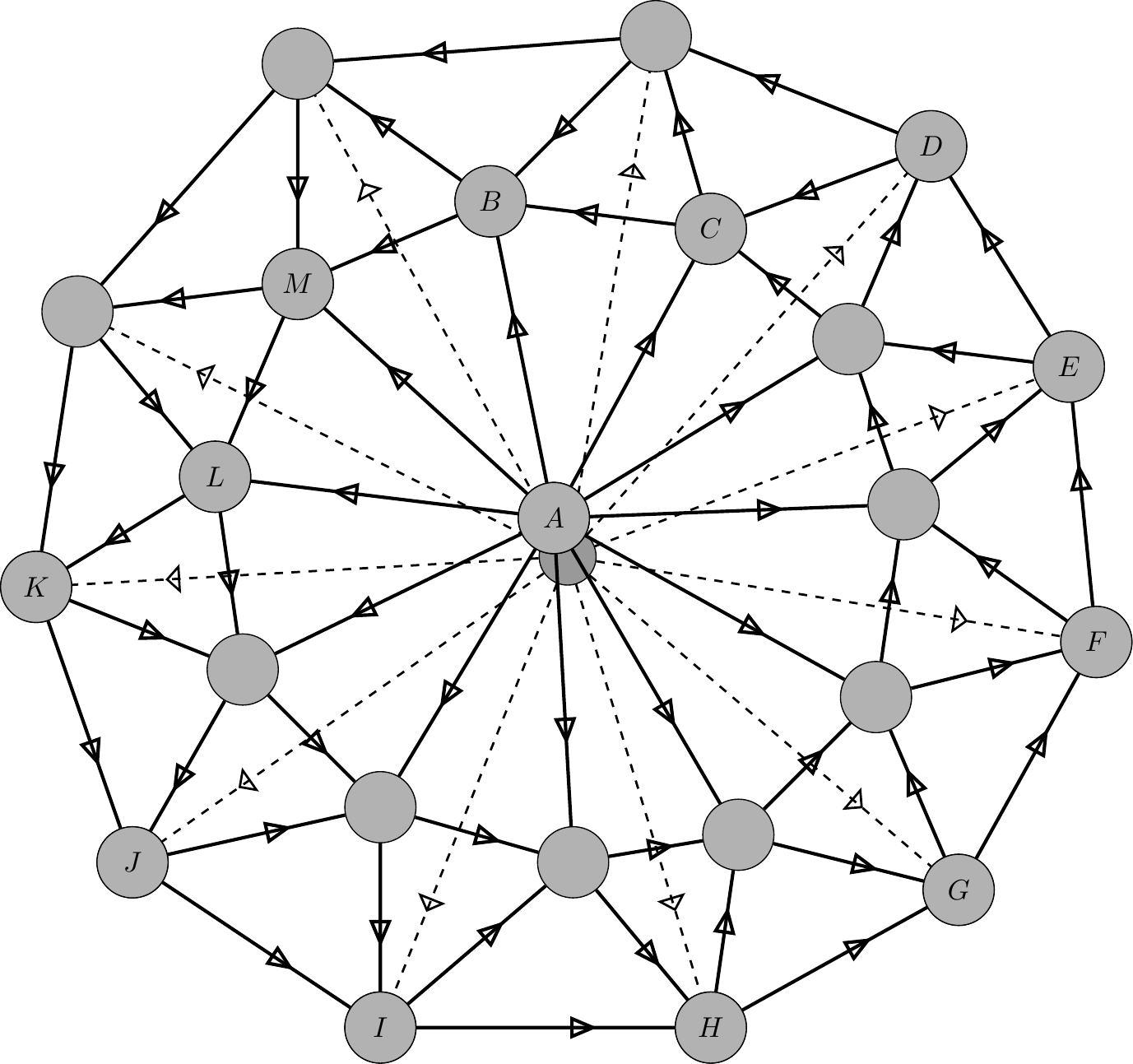}
\caption{Spherical Diagram, SD11, of F(2,11)}
\label{sdn}
\end{figure}
\newpage
\begin{figure}[h]
\centering
\includegraphics[width=6cm]{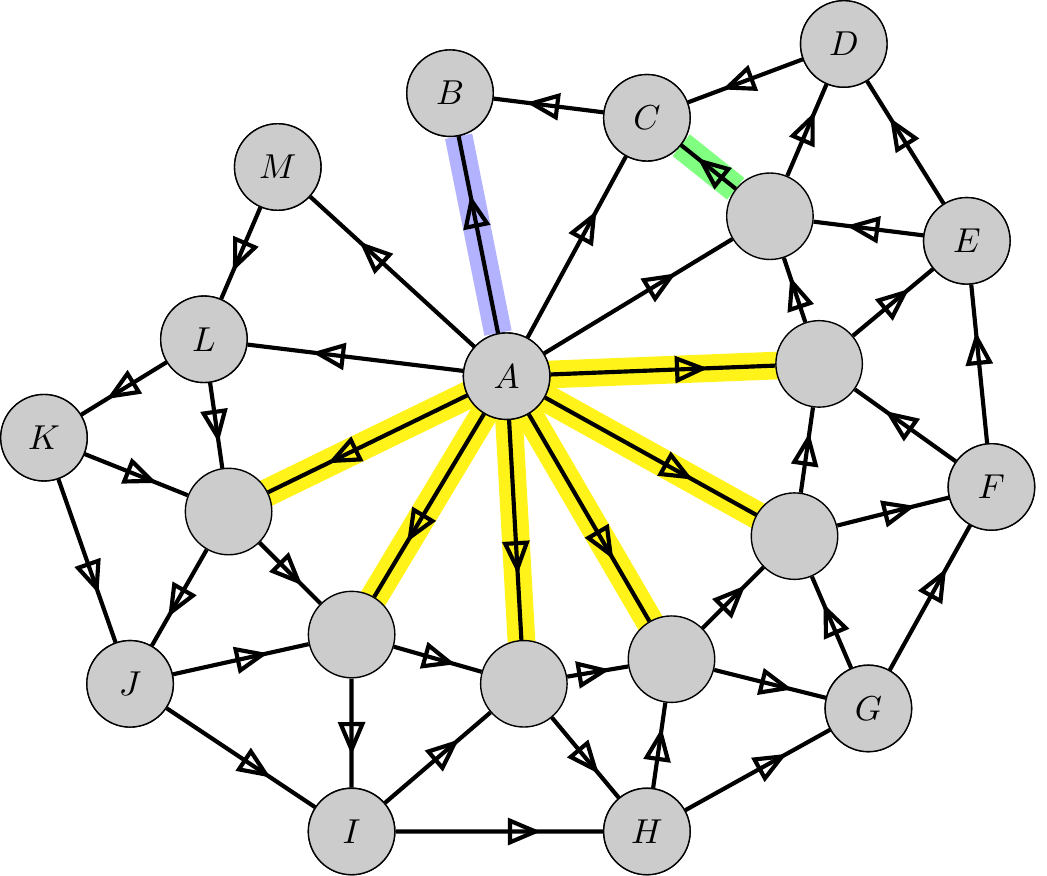}
\caption{25 face subdiagram that matches one in SD11}
\label{diag25}
\end{figure}
\begin{figure}[ht]
\centering
\includegraphics[width=6cm]{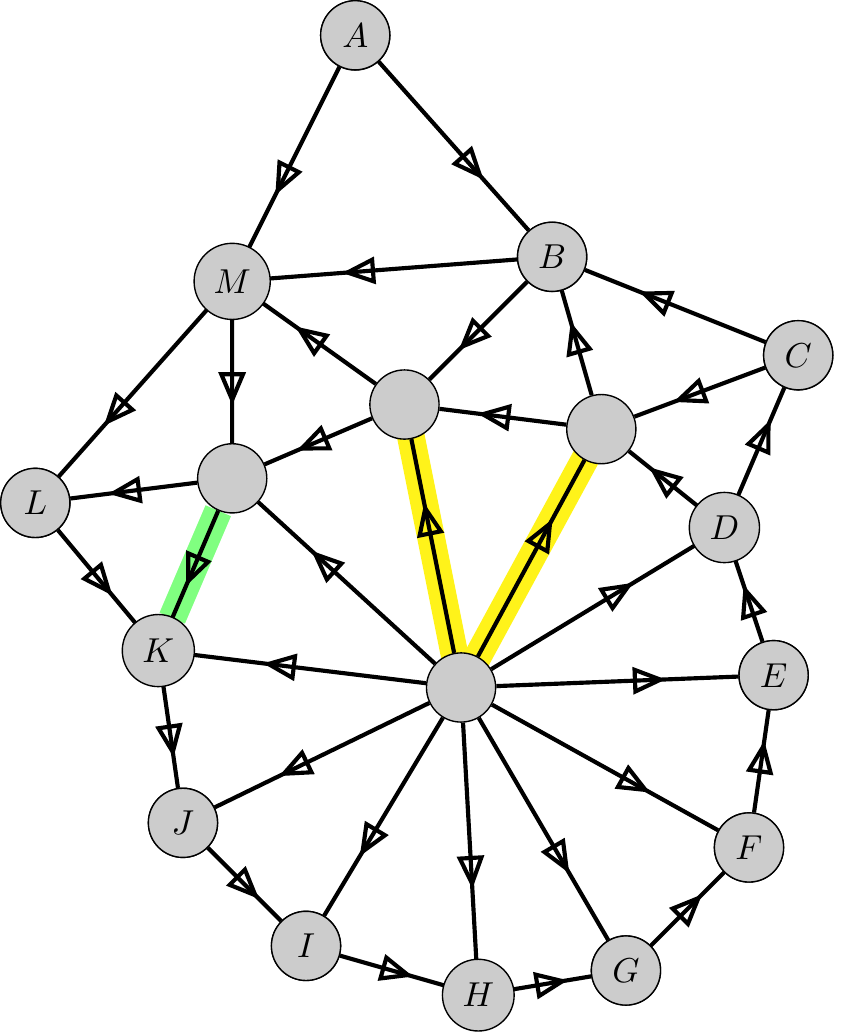}
\caption{19 face subdiagram, complement in SD11 to Figure~\ref{diag25}}
\label{diag19}
\end{figure}

\begin{lemma}
Let $K$ be spherically reduced. A consecutive sequence of $n$ outward edges attached to a vertex, $v$, contains at most $(n - 3)/2$ yellow edges. 
\end{lemma}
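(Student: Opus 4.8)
The plan is to argue by contradiction from spherical reducedness. Assuming the sequence carries at least $(n-1)/2$ yellow edges, I would exhibit a subdiagram of $K$ matching more than $2n$ of the $4n$ faces of SDn; the spherical replacement could then be performed, contradicting the assumption that $K$ is spherically reduced. Write the sequence as $e_1,\dots,e_n$ in cyclic (or linear) order about $v$.

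First I would record the purely local constraint coming from the colour definition. By the definition of a yellow edge, if $e_i$ is yellow then $e_i$ is the middle of a run of five outward edges at $v$, so $e_{i-2},\dots,e_{i+2}$ are all outward. Taking the run of outward edges through $e_1,\dots,e_n$ to be maximal, this immediately forbids $e_1,e_2,e_{n-1},e_n$ from being yellow, leaving only the $n-4$ interior positions $e_3,\dots,e_{n-2}$ as candidates. I would then establish a spacing condition in the spirit of Lemma 2: no two candidate yellow edges may be consecutive. Here the $5$ vertex attached at a yellow edge $e_i=vW_i$ has $W_iv$ as its first inward edge, so two adjacent yellow edges $e_i,e_{i+1}$ would force their $5$ vertices $W_i,W_{i+1}$, together with the fan face $vW_iW_{i+1}$ and the outward edges of $W_i$ and $W_{i+1}$, to reproduce a cap-and-band patch of SDn. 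Granting the spacing condition, the candidate positions $e_3,\dots,e_{n-2}$ contain no two consecutive yellow edges, and since $n$ is odd this yields at most $\lceil (n-4)/2\rceil=(n-3)/2$ yellow edges, the asserted bound.

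The hard part will be the SDn face count that powers the spacing condition, and in particular the borderline case in which $v$ itself has degree $n$ with all $n$ edges outward (playing the role of a pole of SDn). There the sequence closes into a cycle, the endpoint exclusion disappears, and a naive ``no two consecutive'' argument would only give $\lfloor n/2\rfloor=(n-1)/2$; the extra unit must be extracted from spherical reducedness. Concretely, I would show that an arrangement of $(n-1)/2$ yellow edges, each the centre of a run of five outward edges, already forces the forced $5$ vertices and their band faces to accumulate beyond the $2n$ threshold, so that the configuration is not spherically reduced.

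Making this count precise is the main obstacle, and is exactly the step that uses the geometry of SDn rather than the colour rules alone. I expect the delicate bookkeeping to be tracking how the band faces below consecutive forced $5$ vertices overlap, so that the total number of matched SDn faces can be shown to cross $2n$ at precisely $(n-1)/2$ yellow edges and not before; this is what pins the bound at $(n-3)/2$ rather than a weaker estimate, and the numerology $25=2n+3$, $19=2n-3$ of the example subdiagram for $n=11$ is the sort of tight accounting I would aim to reproduce in general.
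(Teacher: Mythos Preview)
Your proposed spacing condition---that no two consecutive outward edges of $v$ can both be yellow---is false, and this is the central gap. If $e_i$ and $e_{i+1}$ are both yellow, the two attached $5$ vertices $W_i,W_{i+1}$ together with the faces of the fan at $v$ around them form a patch matching a piece of SDn with roughly $8$ or $9$ faces, which is far below the $2n$ threshold for spherical replacement when $n\geq 11$. So spherical reducedness does \emph{not} forbid adjacent yellow edges; indeed, for $n=11$ one can have four consecutive yellow edges within a maximal run of $11$ outward edges and still sit at or under the $22$-face threshold. Your endpoint exclusion is also not available in general: the lemma concerns an arbitrary window of $n$ consecutive outward edges, which may lie strictly inside a longer outward run, so positions $e_1,e_2,e_{n-1},e_n$ can be yellow.

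The paper's argument is different and does not attempt to separate yellow edges. Instead it performs a single global face count. Each of the $k$ yellow edges ends at a $5$ vertex, and Lemma~2\,f supplies one further $5$ vertex at the uncoloured edge following the last yellow edge (having first ruled out the case that all $n$ edges are yellow, which would already match $\geq 3n$ faces of SDn). The $n$-edge fan at $v$ together with these $k+1$ attached $5$ vertices matches a subdiagram of SDn; counting faces with care for the shared ``bottom'' and ``top-left/top-right'' faces gives at least
\[
(n-1)+1+2(k+1)\ \leq\ 2n,
\]
whence $k\leq (n-3)/2$ since $n$ is odd. The crucial idea you are missing is the extra $5$ vertex furnished by Lemma~2\,f: it is this $+1$ that converts the face count into the sharp bound, without any appeal to spacing or endpoint restrictions.
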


\begin{proof}
If every outward edge attached to $v$ is yellow, then $n$ consecutive yellow edges of $v$ together with the 5 vertices attached to these edges form a subdiagram of K, which matches a subdiagram of SDn that contains at least $3n$ faces. This contradicts $K$ being spherically reduced, so we may assume that there is a yellow edge attached to $v$ which is followed by an uncoloured edge.

Suppose there are $k$ yellow edges attached to $v$. By Lemma 2 f, an uncoloured edge that follows one of the yellow edges is attached to a 5 vertex. So, there are at least $k + 1$ 5 vertices connected to $v$ by $k +1$ outward edges. These 5 vertices form an ordered sequence that we can consider as sitting 'above' $v$ and running from left to right. The subdiagram, $S_K$, of K formed from the $n$ outward edges attached to $v$ and the attached $k+1$ 5 vertices then matches a subdiagram of SDn.  In $S_K$, a 5 vertex shares the 'bottom' 2 of its 5 faces, corresponding to the faces VAB and VAC in Figure \ref{5v}, with 2 faces attached to $v$. The 5 vertex could also share its 'top left' face, corresponding to VDC in Figure \ref{5v}, with the 'top right' face of a 5 vertex that immediately precedes it. Because K is spherically reduced, the number of  faces in $S_K$ must not be greater than $2n$. So, calculating the smallest possible number of faces in $S_K$ and taking care not to doubly count possibly shared faces, we have
\begin{equation*}
  n - 1 + 1 + 2(k+1) \leq 2n
\end{equation*}
from which we get $k \leq (n-3)/2$ since $n$ is odd.
\end{proof}
\subsection{Classifying interior vertices}
For the rest of this paper, K will indicate an arbitrary van Kampen diagram of $F(2,n)$ which is reduced, spherically reduced and has been coloured according to the colour definitions above.

An interior vertex which is attached to exactly 6 uncoloured edges is called a deg-6 vertex. An interior vertex which is attached to at least 7 uncoloured edges is called a deg-7 vertex.

We now classify the of interior vertices of $K$. 

\begin{lemma}

An interior vertex, $v$ in $K$ is either a 5 vertex, a deg-7 vertex or one of 5 types of deg-6 vertex named here as ‘GG’, ‘YB’, ‘Y+’, ‘Y-‘, and ‘5G’ and shown in the  Figures~\ref{GG} to~\ref{5G} below. 
\end{lemma}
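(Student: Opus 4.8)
The plan is to take an arbitrary interior vertex $V$ that is not a 5 vertex and to classify it by a case analysis organised around the number of uncoloured edges attached to it, using throughout the angle-sum condition (the intra-block angles about any interior vertex must sum to $0 \pmod n$) together with the colouring constraints collected in Lemma 2. If $V$ turns out to have at least $7$ uncoloured edges it is a deg-7 vertex by definition and there is nothing more to prove, so the real content is to show that a non-5 vertex with at most $6$ uncoloured edges has exactly $6$ of them and realises one of the five pictures GG, YB, Y+, Y-, 5G.

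First I would record the coarse structure of $V$. Writing $V$ as its cyclic sequence of maximal inward and outward blocks, the transitional angles cancel, so the angle-sum solution is assembled from the intra-block angles, which are $\pm 2$ inside an outward block and $\pm 1$ inside an inward block, of a single sign within each block. Since $n \geq 11$ and each such angle has absolute value at most $2$, the solution must equal $0$ exactly, and this, together with reducedness (which forbids successive $+1,-1$ or $+2,-2$ in the solution and excludes the degenerate small-degree alternating patterns), leaves only finitely many admissible block patterns for each small degree; degree $5$ is the 5 vertex by Lemma 1, and every other admissible pattern has at least six edges.

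Next I would bound the coloured edges against the uncoloured ones. Lemma 2(a) forces the first edge of the outward block and the first and last edges of each inward block to be uncoloured; Lemma 2(e) supplies four uncoloured outward edges as soon as an outward block is long; and Lemma 2(b),(c),(d) restrict how greens, blues and yellows may be arranged, keeping a blue as the last outward edge, forbidding two consecutive greens, and separating a blue from a yellow by at least two uncoloured edges. Feeding these restrictions into the admissible block patterns shows that the coloured edges can sit only in a few specific positions, that the uncoloured count is therefore exactly six, and that the admissible colourings are precisely two greens (type GG), a yellow with a blue (type YB), a lone yellow in either of its two orientations (types Y+ and Y-), and a green accompanying an adjacent 5 vertex (type 5G); here Lemma 2(f) is what guarantees that a trailing yellow is always followed by an edge into a genuine 5 vertex, making the Y and 5G pictures well defined.

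The main obstacle is the exhaustiveness of the middle two steps: one has to be certain that the interplay between the mod-$n$ angle-sum equation and the six separate constraints of Lemma 2 leaves no further block pattern, no vertex with only five uncoloured edges, and no sixth colouring once the uncoloured count is pinned to six. In particular the multi-block patterns need care, since two short outward blocks could superficially cancel through a $+2,-2$ pair that is in fact excluded by reducedness. This bookkeeping is the heart of the argument; once the admissible local pictures are in hand, reading off the five named types is immediate.
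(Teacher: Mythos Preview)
Your outline has a genuine gap that would make the classification incomplete. You assert that ``since $n \geq 11$ and each such angle has absolute value at most $2$, the solution must equal $0$ exactly'', and you then organise the analysis around ``admissible block patterns for each small degree''. But four of the five deg-6 types have angle sum equal to $\pm n$, not $0$, and their degree grows with $n$: the YB and Y+ configurations have degree $(n+3)/2 + 1$ with angle sum $1 + \frac{n-1}{2}\cdot 2 = n$; the Y- configuration has degree $(n+5)/2 + 1$ with angle sum $\frac{n+1}{2}\cdot 2 - 1 = n$; and the 5G configuration is an all-inward vertex of degree $n$ (occurring only for $n=11$), again with angle sum $n$. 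Only GG is a genuinely small-degree vertex with angle sum exactly $0$. So an enumeration confined to small degree and to the exact solution $0$ would recover GG and nothing else among the deg-6 types.

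Two further points. First, your description of 5G as ``a green accompanying an adjacent 5 vertex'' is not what 5G is: it is the degree-$n$ vertex with all edges inward and the maximal number $(n-1)/2$ of green edges, which reaches exactly six uncoloured edges only when $n=11$. Second, you never invoke Lemma~3, but that lemma (the spherical-reduction bound of at most $(n-3)/2$ yellow edges in a run of $n$ outward edges) is precisely what the paper uses to rule out deg-6 behaviour in the all-outward case and in the case of three inward edges followed by $n$ outward edges; without it, a long outward block could in principle carry enough yellow edges to leave only six uncoloured. The paper's organisation---by the number $d$ of outward/inward components and, within $d=1$, by the number of inward edges---is designed exactly to handle these high-degree possibilities systematically; your sketch would need a comparable mechanism to cover them.
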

\begin{proof}

Define a $component$ of an interior vertex, $v$, to be a sequence of outward edges followed by a sequence of inward edges.  
Let $v$ consist of $d$ components. 
\begin{description}
\item[Case $d>2$] \hfill \\
By lemma 2 a, there are 3 uncoloured edges per component, the first outward edge and the first and last inward edges. So, $v$ is attached to $\geq 9$ uncoloured edges. 
\item[Case $d=2$] \hfill \\
We can reason as above and assert that $v$ is attached to at least 6 uncoloured edges (3 uncoloured edges per component). If an outward edge sequence has more than two edges or if an inward edge sequence has more than 3 edges then, by the colour definitions and Lemma 2, $v$ will have at least one extra uncoloured edge and so at least 7 altogether. We can assume, therefore, that the maximum number of edges in an outward sequence or an inward sequence are 2 and 3 respectively. If we assign angles to the various pairs of consecutive outward or inward edges then the only combinations of 4 or 6 angles that can add up to $0 \mod n$ are the angle sums
\begin{align*}
2 - 1 - 2 + 1   \\
2 + 1 - 2 - 1  \\
2 - 1 -1 - 2 + 1 + 1   \\  
2 + 1 + 1 - 2 - 1 - 1  
\end{align*}
Figures~\ref{eight7} and~\ref{ten7}, below, show the configurations corresponding, respectively, to the angle sums $2 + 1 - 2 - 1$ (read anticlockwise) and $2 - 1 - 1 - 2 + 1 + 1$ (read clockwise).
A blue edge corresponds to the occurrence of two consecutive angles of $2 + 1$ or $- 1 - 2$ indicating that a last outward edge is followed by a last inward edge or vice versa.  We observe that only one of these angle sum pairs appear in any of the above angle sum solutions. This implies that a blue edge cannot occur in both components. So, one of the components must have at least 4 uncoloured edges and there are, in total, 7 uncoloured edges attached to $v$. 
\item[Case $d=1$] \hfill \\
If all the edges attached to $v$ are inward, then since no two successive inward edges can be coloured green, $v$ can be a deg-6 vertex only when $n=11$. This is the 5G configuration shown in Figure~\ref{5G} below.

If all the edges attached to $v$ are outward then $v$ has degree at least $n$ and, by Lemma 3, is attached to at most $(n-3)/2$ yellow edges. The number of uncoloured edges is therefore at least $(n+3)/2 \geq 7$ since $n \geq 11$.

If $v$ is attached to just 2 inward edges, then the possible angle sum solutions are 
\begin{align*}
1 + (n - 1)/2 \times 2  \\
(n + 1)/2 \times 2 - 1 
\end{align*}
These give rise to the configurations YB or Y+, of degree $(n + 3)/2 + 1$, for the first sum, and configuration Y-, of degree $(n + 5)/2 + 1$, for the second sum. See Figures~\ref{YB},~\ref{Y+} and~\ref{Y-}, below.
In each case 4 outward edges and 2 inward edges are uncoloured.

If $v$ is attached to 3 inward edges, then possible angle sum solutions are
\begin{align*}
1 + 1 - 2  \\
1 + 1 + (n-1) \times 2 
\end{align*}
The first angle sum solution is for the 5 vertex where 3 inward edges are followed by 2 outward edges and the inward and outward edges have the same orientation, as in figure 2. 
The second angle sum solution indicates that $v$ is attached to a sequence of $n$ outward edges. By Lemma 4, there are at most $(n - 3)/2$ yellow ages and there may also be a blue edge, so the number of uncoloured outward edges is at least $n - (1+(n-3)/2) = (n+1)/2 \geq 6$. Together with at least 2 uncoloured inward edges, this gives 8 uncoloured edges altogether.

If $v$ is attached to 4 inward edges, then the only angle sum solution is 
\begin{equation*}
1 + 1 + 1 + (n - 3)/2 \times 2 
\end{equation*}
But then, by Lemma 2 a, b and e, there would be 4 uncoloured outward edges and 3 uncoloured inward edges and so, $v$ would be attached to 7 uncoloured edges. 

We assume now that $v$ is attached to more than 4 inward edges and so, by Lemma 2 a and b, is attached to at least 3 uncoloured edges. 

If $v$ is attached to more than 4 outward edges, then by Lemma 2 e, $v$ would be attached to at least 4 uncoloured outward edges and so, to at least 7 uncoloured edges altogether.  

If $v$ is attached to 2 outward edges then the number of input edges must be $\geq n - 1$. Then, by Lemma 2 a and b, the number of uncoloured inward edges would be at least 6. In addition, at least one of the outward edges must be uncoloured. 

If $v$ is attached to 4 outward edges, then the only angle sum solutions are 
\begin{align*}
2 + 2 + 2 - 1 - 1 - 1 - 1 - 1 - 1    \\
2 + 2 + 2 + (1 \times (n-6)) 
\end{align*}
By Lemma 2 a and the colour definition of yellow, there are at least 3 uncoloured outward edges. 
The number of inward edges is at least 6 which by Lemma 2 a and b implies that at least 4 of them are uncoloured. 

So, we can assume that the number of outward edges is 3. 
Possible angle sum solutions for $v$ are then 
\begin{align*}
2 + 2 - 1 - 1 - 1 - 1 \\
2 + 2 + 1 \times (n - 4)
\end{align*}
The first sum gives the GG configuration of degree 8 shown in Figure~\ref{GG} below. 3 outward edges and 3 inward edges are uncoloured.
The second sum implies that there are $\geq 8$ inward edges, at least 5 of which must be uncoloured (by Lemma 2 a and b). There are also at least 2 uncoloured outward edges.

This concludes all the possibilities and we have shown that the only possible vertex types are the 5 vertex, deg-7 and the 5 deg-6 type vertices GG, YB, Y+, Y- or 5G. 
\end{description}
\end{proof}
Note that the largest degree deg-6 vertex is the 5G type which has degree $n$. 
\begin{figure}[h]
\centering
\includegraphics[width=6cm]{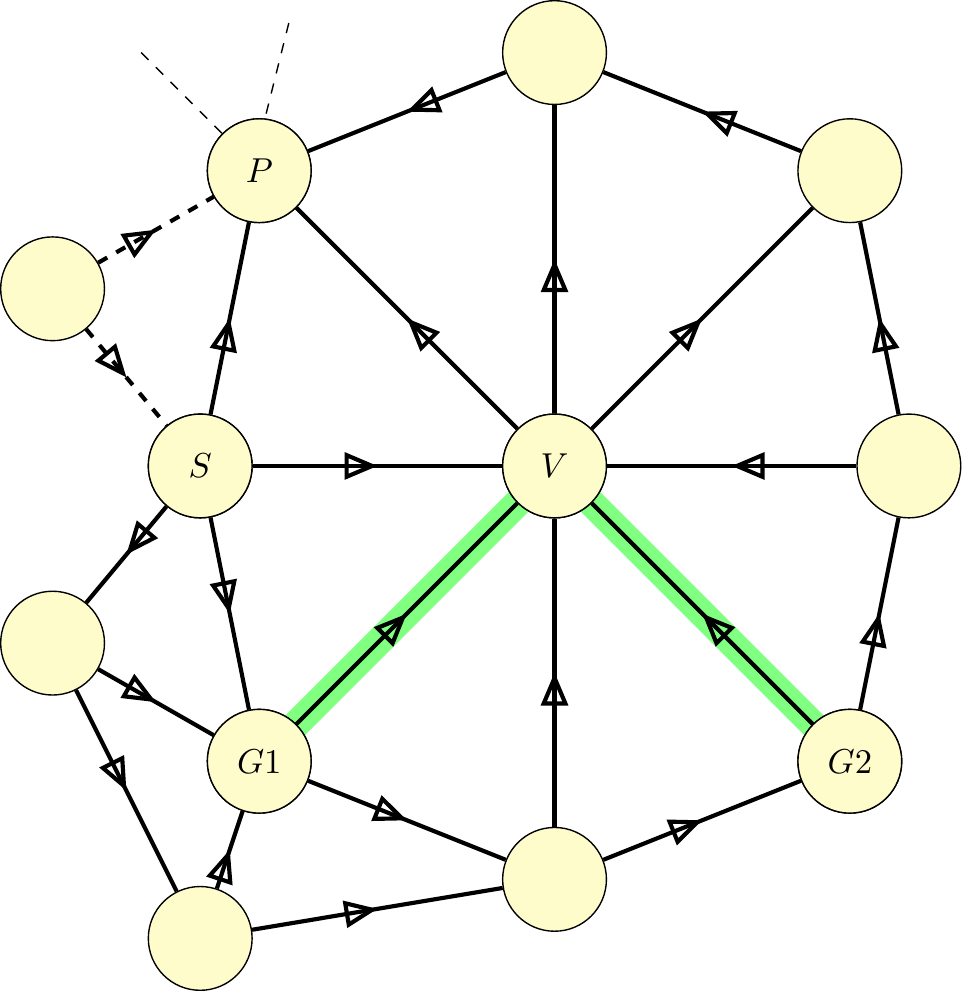}
\caption{GG deg-6 type}
\label{GG}
\end{figure}
\begin{figure}[h]
\centering
\includegraphics[width=6cm]{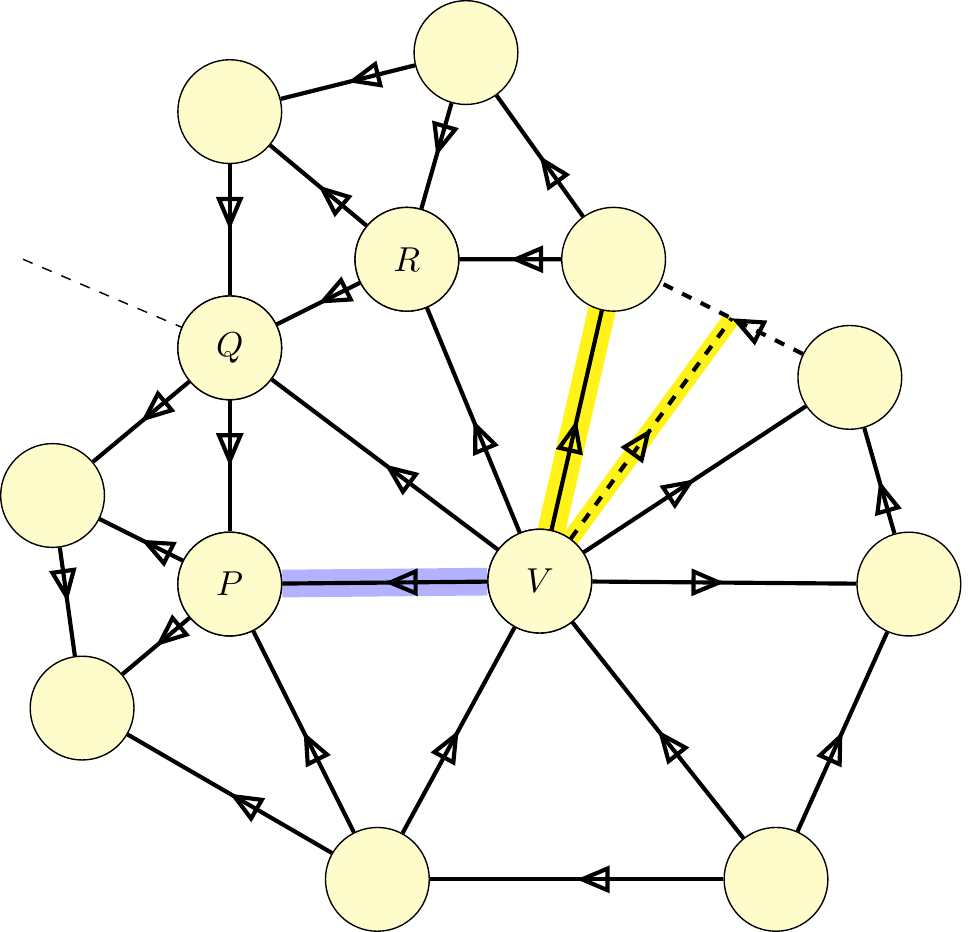}
\caption{YB deg-6 type}
\label{YB}
\end{figure}
\begin{figure}[h]
\centering
\includegraphics[width=6cm]{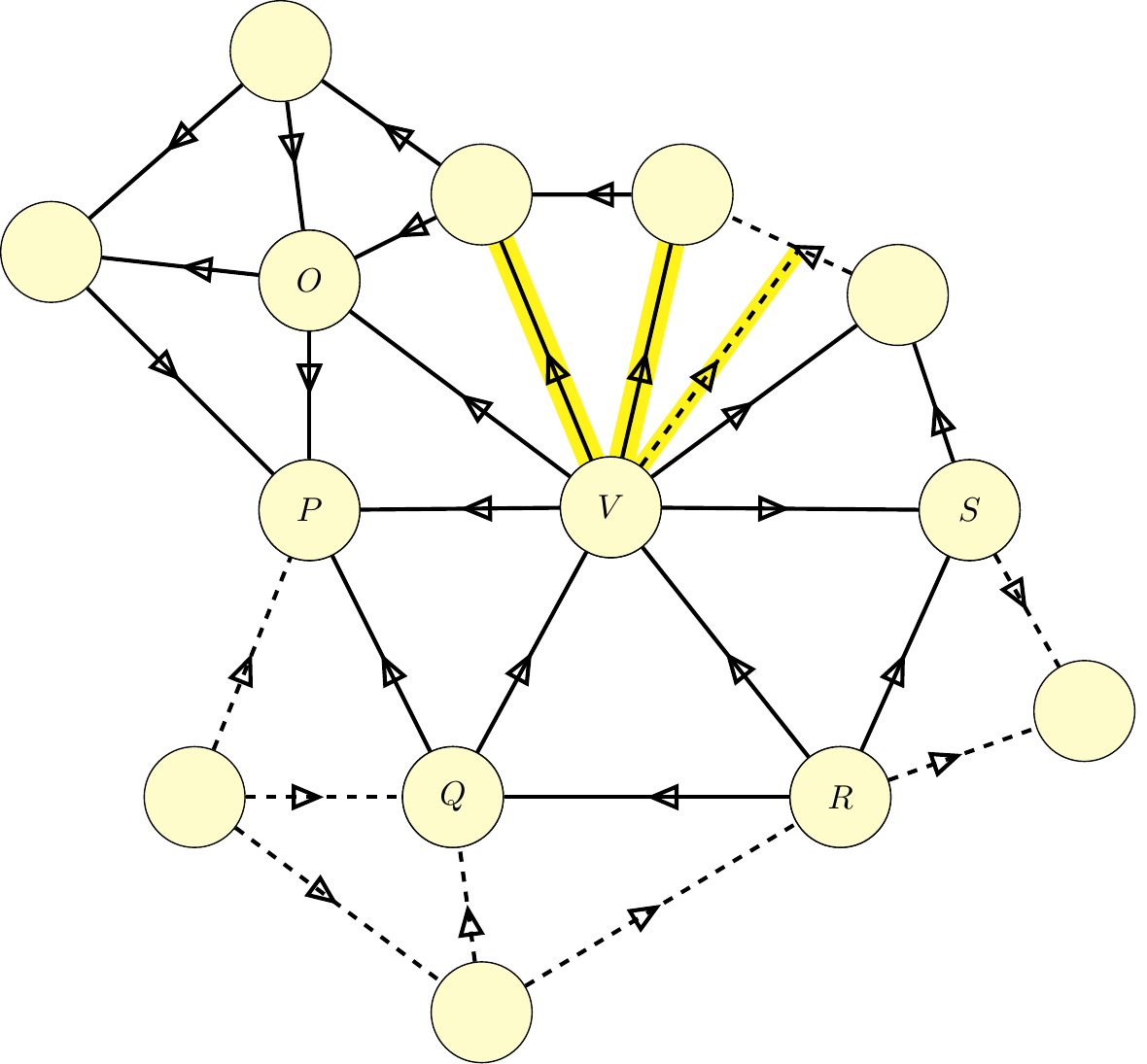}
\caption{Y+ deg-6 type}
\label{Y+}
\end{figure}
\begin{figure}[h]
\centering
\includegraphics[width=6cm]{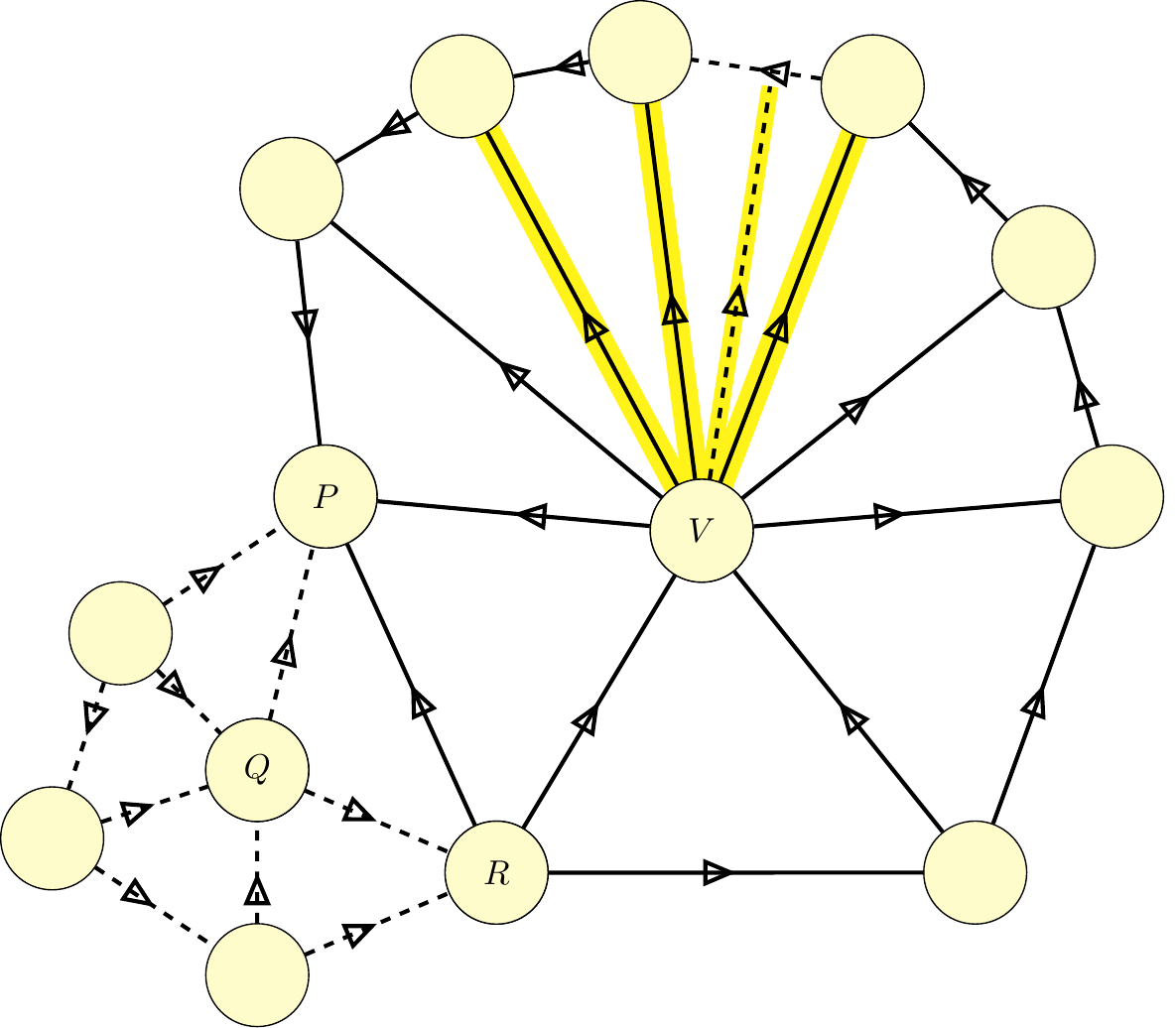}
\caption{Y- deg-6 type}
\label{Y-}
\end{figure}
\begin{figure}[h]
\centering
\includegraphics[width=6cm]{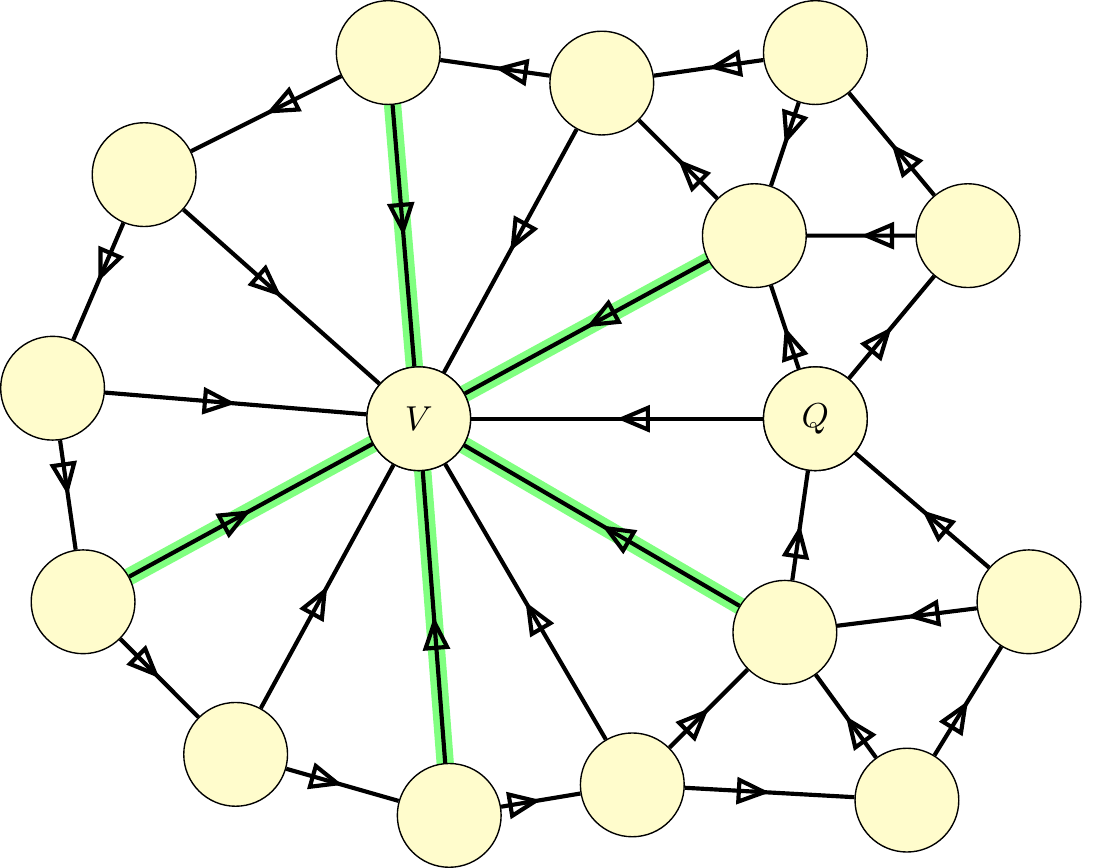}
\caption{5G deg-6 type}
\label{5G}
\end{figure}
\begin{figure}[h]
\centering
\includegraphics[width=6cm]{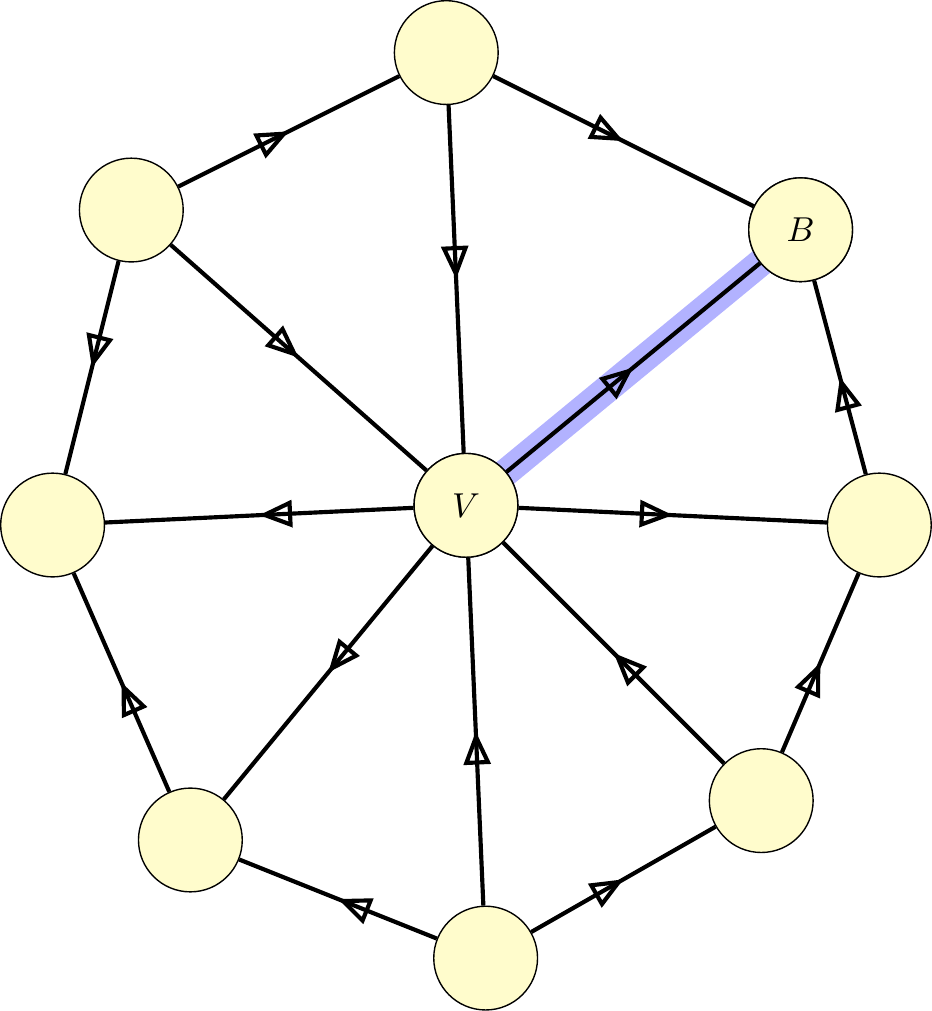}
\caption{8-7 multicomponent vertex}
\label{eight7}
\end{figure}
\begin{figure}[h]
\centering
\includegraphics[width=6cm]{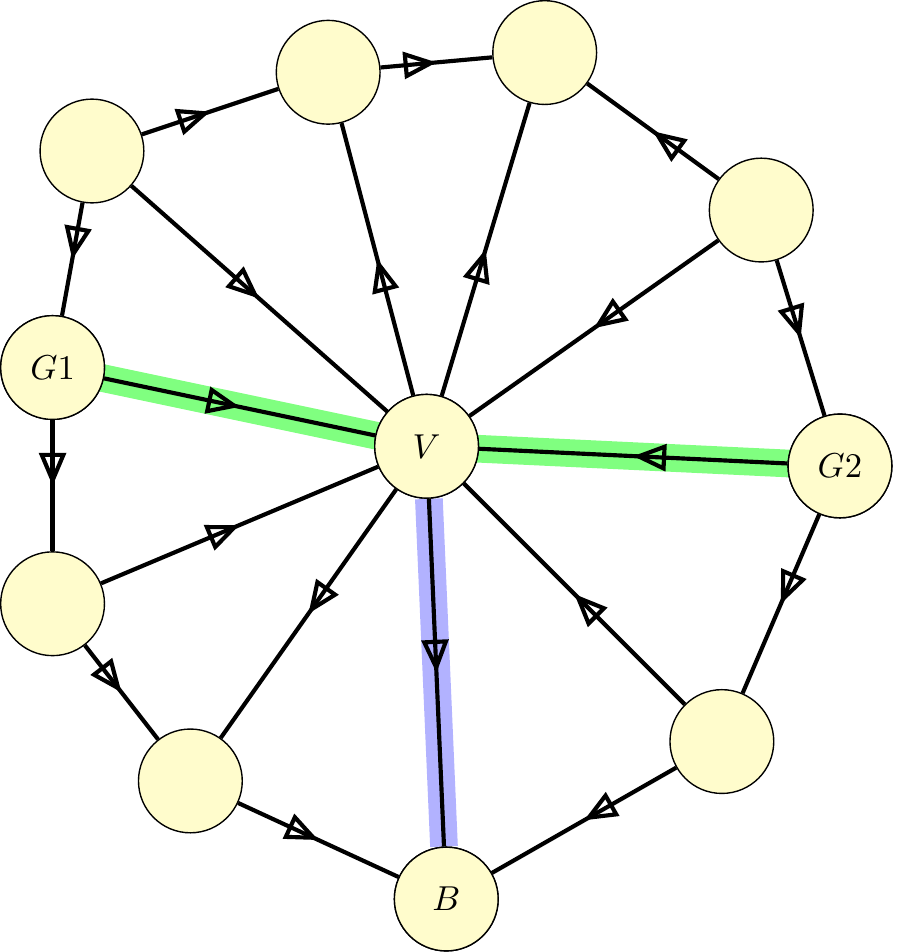}
\caption{10-7 multicomponent vertex}
\label{ten7}
\end{figure}
\newpage
\FloatBarrier
\subsection{Layers}
The vertices in K can be organised by type into a hierarchy of 5 layers, where layer 1 is the highest. \begin{description}
\item[\textit{Layer 1.   Boundary vertices and deg-7 vertices.}] 
\item[\textit{Layer 2.   GG and YB vertices.}] 
\item[\textit{Layer 3.   Y+ vertices.}] 
\item[\textit{Layer 4.   Y- and 5G vertices.}] 
\item[\textit{Layer 5.   5 vertices.}] 
\end{description}
\begin{lemma}
If a vertex, $v$, belongs to layer l, where $l > 1$ then $v$ is connected by an edge to a vertex in a higher layer m, where $m < l$.
\end{lemma}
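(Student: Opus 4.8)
The plan is to argue by cases on the type of $v$. Since $v$ lies in a layer $l>1$, and every boundary vertex and every deg-7 vertex already belongs to layer 1, $v$ is an interior vertex which by Lemma 4 is either a 5 vertex (layer 5) or one of the deg-6 types GG, YB (layer 2), Y+ (layer 3), or Y-, 5G (layer 4). The base case is layer 5: if $v$ is a 5 vertex then Lemma 1 supplies a unique coloured edge at $v$, and that edge joins $v$ to a vertex which is not a 5 vertex, hence to a vertex in one of layers 1--4, each of which is higher than layer 5. This disposes of layer 5 at once, and it remains to produce, for each deg-6 type, an incident edge running to a strictly higher layer.

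For the deg-6 cases I would first isolate the organising principle that every coloured edge is, by the colour definitions, one of the five edges of some 5 vertex; consequently, for a vertex $v$ that is not a 5 vertex, every coloured edge at $v$ descends to layer 5. An ascending edge at a deg-6 vertex must therefore be sought among its \emph{uncoloured} edges, and by Lemma 2 a the first outward edge of $O$ together with the first and last edges of $I$ are always uncoloured and so supply the natural candidates. For each type I would read off, from its figure and its angle-sum solution, the local configuration forced around such an uncoloured edge, and then apply the colour definitions together with Lemma 2 to pin down the type of the vertex at its far end. For YB, Y+ and Y-, each of which carries only two (uncoloured) inward edges, the inward edges are the candidates; for GG the three uncoloured inward edges; and for 5G, all of whose edges are inward, the uncoloured inward edges following the green edges, which by the argument used in the proof of Lemma 2 b attach to vertices that are not 5 vertices.

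The substance of the argument, and the step I expect to be the main obstacle, is verifying that the neighbour so reached lies in a \emph{strictly} higher layer, and not merely that it fails to be a 5 vertex: a GG or YB vertex must reach layer 1, a Y+ vertex must reach layer 1 or 2, and a Y- or 5G vertex must reach one of layers 1--3. For each type I would assume the candidate neighbour lay in the same layer as $v$, or lower, and derive a contradiction from the angle-sum constraints together with the colour rules of Lemma 2 (in particular parts b, c, d and f), which sharply restrict how two low-layer deg-6 vertices can be joined by an uncoloured edge. I expect the 5G vertex---which exists only for $n=11$ and has all of its edges inward---and the Y- vertex to demand the most care, since they sit lowest among the deg-6 types and so leave the most room for a same-layer neighbour; these cases, together with the bookkeeping of edge orientations around each vertex, are where I expect the real difficulty to lie.
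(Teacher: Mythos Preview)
Your plan is essentially the paper's plan: a case-by-case treatment of each vertex type in layers 2--5, dispatching layer 5 via Lemma 1, and for each deg-6 type reading off from its figure the local configuration around a suitable uncoloured edge and using Lemma 2 together with the colour rules to force the far vertex into a strictly higher layer.

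One point where the execution will differ from your sketch is the choice of candidate edge. For YB the paper does \emph{not} use one of the two inward edges of $V$; instead it looks at the uncoloured outward edge just before the blue one, whose far end $Q$ is forced (because $R$ and $P$ are 5 vertices) to carry three inward edges oppositely oriented to two outward edges, a pattern matching no deg-6 type. For GG the paper uses the uncoloured inward edge adjacent to a green edge, exploiting the 5 vertex $G_1$ already sitting there. For Y+ and Y- the argument is two-step rather than a single contradiction: the first candidate neighbour $P$ is shown to be either GG (done) or 5G, and only in the 5G sub-case does one pass to a second neighbour $R$ and pin it down as GG (for Y+) or Y+ (for Y-). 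For 5G your choice coincides with the paper's. So your framework is right, but expect to revise the specific edges you test in the YB case and to run a short cascade in the Y$\pm$ cases rather than a one-shot contradiction.
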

\begin{proof}
We check this for each possible type of vertex, starting at layer 2 and ending at layer 5. 
\begin{description}
  
\item[GG case] \hfill \\ 
With reference to Figure~\ref{GG}. 
The edge, VG1, is a green edge and so vertex G1 is a 5 vertex. If S is not a boundary or deg-7 vertex, then it must be a Y+, Y- or YB type vertex. But then, because neither SV and SG1 are yellow edges (since a 5 vertex can only be attached to one coloured edge) and SG1 is not the first outward edge of S, S must be a YB type vertex such that SP is the last outward edge, SP is a blue edge and P has degree 5. But this is impossible because then P would be attached to 4 inward edges.  
\item[YB case] \hfill \\
With reference to Figure~\ref{YB}. 
By Lemma 2 f and the definition of blue, R and P are 5 vertices. This forces Q to be attached to 3 consecutive inward edges, starting at QV, which are oppositely oriented to the two outward edges starting at QP. This does not match any of the 5 types of deg-6 vertices established in Lemma 4, so, Q must be a deg-7 or boundary vertex. 
\item[Y+ case] \hfill \\ 
With reference to Figure~\ref{Y+}. 
Assume no vertex attached to $V$ is a deg-7 or a boundary vertex. Vertex O is a 5 vertex by Lemma 2 f, so P is either a GG or a 5G vertex. Suppose P is a 5G vertex. Q is then attached to exactly 2 outward edges so is not a deg-6 type vertex.  Q must therefore be a 5 vertex as shown.  Assume R is a YB, Y- or Y+ type vertex where RS is the third outward edge. But then RS would be yellow and S would be a 5 vertex with an inward sequence of only 2 edges, which is impossible.  So, R must be a GG vertex type. 
\item[Y- case] \hfill \\ 
With reference to Figure~\ref{Y-}. 
We argue as in the Y+ case and assume that no vertex attached to $v$ is a deg-7 or boundary vertex. We assume that P is not a GG vertex but a 5G vertex. Figure 11 shows that the 5G vertex type is attached to at least one green edge for every 3 consecutive inward edges. Vertices R and V are not 5 vertices so PR and PV are uncoloured. PQ must, therefore, be green and Q must be a 5 vertex as shown. Then, we see that R is attached to two inward edges in opposite orientation to at least 3 outward edges attached to R and so R must be a Y+ type vertex. 
\item[5G Case] \hfill \\ 
With reference to Figure~\ref{5G}. 
The vertex Q is attached to V and two adjacent 5 vertices. We then see that, if Q is not a boundary or deg-7 vertex, it must be a Y+ type vertex. 
\item[5 vertex] \hfill \\ 
If $v$ is a 5 vertex then, by lemma 1, it is attached by a coloured edge to a vertex which is not a 5 vertex. 
\end{description}
\end{proof}

\section{Applying curvature calculations to the \\ van Kampen diagram, K, to prove hyperbolicity.}
Our curvature calculations are based on the RSym algorithm defined in \cite{hyp} section 6. We suppose that K has boundary label $w$ and consists of $F$ faces, $E$ edges and $V$ vertices. The \textit{Area} of K is the number of faces, $F$, in K. The \textit{Length} of K is the word length of $w$. The faces of K which share one or more boundary edges with the boundary of K are called boundary faces.  All other faces are called interior faces and their corners are called interior face corners. If there are $B$ boundary faces, then the number of interior faces is $F - B$. Clearly, \textit{Length} $\geq B$. If $v$ is a vertex of K then $d(v)$ denotes the degree of $v$. 

The RSym algorithm assigns +1 curvature to faces and vertices and -1 curvature to edges. Negative curvature is distributed firstly from edges to vertices and then from vertices to interior faces. Hyperbolicity of the associated group then follows if each interior face ends up with a   negative curvature $\leq$ a constant. We adapt the RSym algorithm in the following way. We call the quantity  $E - V$, $curvature$, and we share it out to all the vertices of K. Because $E = \sum_{v \epsilon V} d(v)/2$, $E - V$ can be shared out so that every vertex, $v$, of K receives, initially, a $curvature$ of $d(v)/2 - 1$.  Curvature is then distributed from each vertex to its attached interior face corners and to neighbouring vertices.  Proposition 6 shows that the distribution of curvature can be organised so that each interior face corner ends up with at least $1/3 + 1/960n^4$ curvature. Theorem 7 shows that this implies that the associated group $F(2,n)$ is hyperbolic.
\begin{proposition}
The curvature quantity, $E-V$, can be distributed so that every interior face corner in K receives a curvature amount of at least $1/3 + 1/960n^4$. 
\end{proposition}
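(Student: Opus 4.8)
The plan is to prove Proposition 6 by a discharging argument in which the initial curvature $d(v)/2-1$ carried by each vertex is first used to pay each of $v$'s interior face corners the base amount $1/3+\varepsilon$ (writing $\varepsilon=1/960n^4$), and the resulting surpluses and deficits are then reconciled by transporting curvature along the edges supplied by Lemmas 1 and 5. The first step is purely arithmetic: for each vertex type of Lemma 4 I would compute the \emph{excess} $\sigma(v)=\big(d(v)/2-1\big)-(1/3+\varepsilon)\,c(v)$, where $c(v)$ is the number of interior face corners at $v$. A $5$ vertex has $\sigma=-\tfrac16-5\varepsilon$, so these are the only deficient vertices, whereas every other interior type has positive excess: $\sigma=\tfrac13-8\varepsilon$ for GG, $\sigma=\tfrac{n-7}{12}-\tfrac{n+5}{2}\varepsilon$ for YB and Y+, $\sigma=\tfrac{n-5}{12}-\tfrac{n+7}{2}\varepsilon$ for Y-, $\sigma=\tfrac n6-1-n\varepsilon$ for 5G, and $\sigma=\tfrac d6-1-d\varepsilon$ for a deg-7 vertex of degree $d$. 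All of these are positive for $n\ge 11$ and $\varepsilon$ small; boundary vertices only help, since $c(v)<d(v)$ forces $\sigma(v)$ up. Thus the deg-6 types are barely positive while deg-7 and boundary vertices carry surplus that grows linearly in the degree.

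Next I would fix the transport rules. By Lemma 1 every $5$ vertex is joined by its unique coloured edge to a non-$5$ vertex, so I send $\tfrac16+5\varepsilon$ across each coloured edge from the non-$5$ endpoint to the attached $5$ vertex; this exactly cancels the deficit of every $5$ vertex. The work is to show each non-$5$ vertex can afford the total it is asked to pay. The number of $5$-children of a vertex $v$ equals its number of coloured edges, and this is what Lemmas 2 and 3 control: green edges are non-consecutive in an inward run (Lemma 2b), at most one blue edge occurs per component (Lemma 2c), and at most $(n-3)/2$ yellow edges occur in any $n$ consecutive outward edges (Lemma 3), while the first and last edges of each run are uncoloured (Lemma 2a,e). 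Together these force the number of coloured edges at $v$ to be at most roughly $d(v)/2$, and for a deg-7 vertex also at most $d(v)-7$. Feeding these bounds into $\sigma(v)$ shows that a deg-7 or boundary vertex always retains a residue of order at least $\tfrac16$ (growing like $d/12$ once $d$ is large), while the deg-6 types, after paying their coloured children, are left only with a deficit of order $\varepsilon$ (for instance $18\varepsilon$ for GG and $36\varepsilon$ for 5G at $n=11$; for $n>11$ the deg-6 types are already in surplus).

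Finally I would clear these residual $O(\varepsilon)$ deficits using the layer hierarchy. By Lemma 5 each deg-6 vertex in layers 2--4 is joined by a single edge to a vertex in a strictly higher layer, so I route each residual $O(\varepsilon)$ deficit up one layer at a time; after at most four steps every such deficit arrives at a layer-1 vertex, which has genuine surplus. The main obstacle is the resulting global $\varepsilon$-accounting: I must verify that the surplus at each layer-1 vertex dominates the sum of all the $O(\varepsilon)$ corrections funnelled into it. Bounding the fan-in---each deg-6 type has degree at most $n$, the chains provided by Lemma 5 have length at most four, and the coloured/uncoloured edge counts of Lemmas 2 and 3 limit how many lower-layer vertices can discharge onto a common ancestor---shows that the total correction reaching one layer-1 vertex is of order $n^4\varepsilon$. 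This is exactly the inequality that fixes the shape of $\varepsilon$: taking $\varepsilon=1/960n^4$ makes $n^4\varepsilon$ smaller than the surplus ($\ge \tfrac16$) retained in the previous step, so every transfer can be met and every interior face corner ends with at least $1/3+\varepsilon$. Carrying out this last calibration---tracking the constants through the four layers so that a single universal $\varepsilon$ works simultaneously at every vertex type---is the delicate part; the earlier arithmetic and the edge-counting from Lemmas 1--3 and 5 are routine by comparison.
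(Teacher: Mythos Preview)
Your overall architecture---pay each interior corner $1/3+\varepsilon$ from its own vertex, cancel the $5$-vertex deficit by sending $1/6+O(\varepsilon)$ across the unique coloured edge, then use Lemma~5 to clear the residual $O(\varepsilon)$ shortfalls layer by layer---is exactly the shape of the paper's argument. Two of your intermediate claims are wrong, however, and the paper organises the small transfers in the opposite direction precisely to avoid the difficulty you call ``delicate.''

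First, ``for $n>11$ the deg-6 types are already in surplus'' is false: a GG vertex has degree $8$ and two green edges for every $n$, so after paying its two $5$-children it sits at $-18\varepsilon$; similarly YB, Y$+$, Y$-$ carry deficits $3(n-5)\varepsilon$, not surpluses. Second, your fan-in bound ``the total correction reaching one layer-1 vertex is of order $n^{4}\varepsilon$'' tacitly assumes layer-1 vertices have degree $O(n)$. A deg-7 or boundary vertex can have arbitrarily large degree $d$, and then as many as $d$ lower-layer neighbours may each push $O(n^{3}\varepsilon)$ upward, giving a load $O(dn^{3}\varepsilon)$ against the surplus you quote as a flat $1/6$. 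In fact the surplus also scales (it is $u/6-1-O(d\varepsilon)$ with $u\geq 7$ the number of uncoloured edges), so the inequality can be closed, but not with the constants you wrote; the calibration you defer to the last paragraph is where the real work lies.

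The paper avoids this global fan-in bookkeeping by pushing curvature \emph{downward} rather than pulling deficits up. After a preliminary modification of $K$ so that every boundary vertex meets exactly two boundary edges, and after splitting large vertices into components of at most $n$ (or $2n$) consecutive edges, each layer-$1$ vertex gives $1/24n$ to \emph{every} adjacent interior vertex and $1/6$ across each coloured edge; a layer-$2$ vertex, having received at least $1/24n$, gives $1/48n^{2}$ to each neighbour; layer~$3$ gives $1/96n^{3}$; layer~$4$ gives $1/192n^{4}$. Because the outgoing amount drops by a factor $2n$ per layer while a deg-6 vertex has degree at most $n$, each inequality is a one-line local check, and the constant $960n^{4}=5\cdot 192n^{4}$ appears only at the end when a $5$-vertex divides its $1/6+1/192n^{4}$ among five corners. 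Your upward-routing scheme can be repaired to give the same conclusion, but the paper's forward scheme makes the constants fall out mechanically.
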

\begin{proof}
 
 We find it convenient to simplify the shape of boundary vertices in K, where necessary, so that we can assume that each one is attached to exactly two boundary edges.
 We do this by performing two types of change on K to produce a possibly fragmented diagram K*. 
 \begin{enumerate}
     \item We remove any boundary edges and boundary vertices which are not attached to a face. These edges form a collection of trees, where each tree has at least one vertex attached to a face. So the number of edges removed is at least the number of vertices removed. After this change, every boundary vertex is left attached to one or more pairs of consecutive boundary edges, where each boundary edge pair encloses one or more faces.
     \item
     If a boundary vertex, $v$, is attached to $p > 1$ boundary edge pairs, we replace $v$ by a collection, $P$=\{$v_i,i=0,..,p-1$\}, of $p$ vertices.  Each new vertex, $v_i$, is allocated a unique pair of consecutive boundary edges, ($e_{2i},e_{2i+1}$), that were formerly attached to $v$ and which enclose one or more boundary faces and possibly one or more interior edges. The vertex $v_i$ becomes the new common endpoint of $e_{2i}$ and $e_{2i+1}$ and also the common end point of any interior edges that lie between $e_{2i}$ and $e_{2i+1}$ and that were once attached to $v$. No edges have been removed or added, so it's clear that the total number of interior face corners attached to the vertices in $P$ is the same as the number of interior face corners attached to $v$.  
 \end{enumerate}
 Every boundary vertex in K* is then attached to exactly 2 boundary edges. All interior vertices in K and boundary vertices not affected by change 2 are left unaltered, so the number of interior face corners in K* is the same as the number of interior face corners in K.
 
 We let $E*$ and $V*$ be the number of edges and vertices, respectively, in K*. The changes made to K imply that the curvature, $E*$ $-$ $V*$, of K*  is less than or equal to the curvature, $E - V$, of K.  We prove the proposition by showing it is true for K* and deduce that it must then also be true for K. The curvature, $E*$ $-$ $V*$, is distributed equally to the vertices of K*. As explained above, each vertex, $v$, in K* then has, initially, a curvature value of $d(v)/2 - 1$.  

 In general, we consider each vertex, $v$, in K* to be comprised of one or more disjoint components. Each component, $c$, of $v$ consists of a number, $m_c$, of consecutive edges attached to $v$. If $C$ is the collection of components of $v$, then the initial curvature of $d(v)/2 - 1$ assigned to $v$ is shared out to each component $c$ as $m_c/2 - f_c$ where $\sum_{c \epsilon C} m_c = d(v)$ and $\sum_{c \epsilon C} f_c = 1$.
 We show that the proposition is true for each type of vertex in the classification established by Lemma 4. We start with the boundary vertices and deg-7 vertices in the top layer, layer 1, of vertices and then work down through the layers.
 
 A vertex in K* is called \textit{large} if it has degree $> n$ and is attached to either at least $n$ consecutive outward edges or at least $n$ consecutive inward edges. By Lemma 4, a $large$ vertex is either a boundary vertex or a deg-7 type vertex. We deal with \textit{large} vertices first so that in subsequent cases we can assume that no vertex is attached to more than $n$ consecutive inward or outward edges.
\begin{description}
\item[Layer 1. $Large$ vertices.] \hfill \\ 
Let $v$ be a \textit{large} vertex. We split the edges attached to $v$ into \textit{big and small components} as follows. Each component type consists of at most $n$ consecutive outward or at most $n$ consecutive inward edges. If a sequence of edges attached to $v$ consists of $m > n$ outward or inward edges, then $[m/n]$ $big$ components, each containing $n$ consecutive edges, are assembled by starting from the last outward or inward edge and working backwards $n$ edges at a time. Any remaining consecutive sequences of outward or inward edges, each one containing less than $n$ edges,  form a $small$ component. Because a $small$ component always contains the start edge of an inward or outward edge sequence, it is guaranteed, by Lemma 2 a, to contain at least one uncoloured edge.  The -1 quantity of the initial curvature of $d(v)/2 - 1$, allocated to $v$, is shared amongst the components as follows.
\begin{itemize}
    \item  -11/12 to an arbitrarily chosen $big$ component,
    \item -1/12 to another arbitrarily chosen $big$ or $small$ component,
    \item 0 to all other remaining $big$ or $small$ components
\end{itemize}

We show that, for each $big$ or $small$ component of $v$, we can share a curvature of
\begin{itemize} 
\item $1/3 + 1/24n$ to each attached interior face corner,
\item 1/6 to each 5 vertex attached by a coloured edge,
\item $1/24n$ to each attached interior vertex
\end{itemize}
Let c be a $big$ component of $n$ outward edges. Then, by Lemma 3, c contains at most $(n-3)/2$ yellow edges. It could also contain a blue edge as the last edge of the outward edge sequence. So altogether it may contain $(n - 1)/2$ coloured edges. If, instead, c is a $big$ component of $n$ inward edges, then, by Lemma 2 b, it contains, also, at most $(n - 1)/2$ coloured edges. A $big$ component contains at most $n$ interior face corners, up to the start of the next component, and has at most $n$ connected interior vertices.

So, for an arbitrary $big$ component, the total curvature of the component less the curvature to be shared is at least
\begin{align*}
\frac{n}{2} - \frac{11}{12} - (\frac{n}{3} + \frac{n}{24n} + \frac{n-1}{2} \times \frac{1}{6}+ \frac{n}{24n}) =
\frac{n-11}{12} \geq 0     
\end{align*}
since $n \geq 11$.

Let c be a $small$ component containing $k$ edges where $k < n$. As observed above, it has at least one uncoloured edge, is attached to at most $k$ interior corners, up to the starting edge of the next component, and connected to at most $k$ interior vertices
So, for an arbitrary $small$ component, the total curvature of the component less the curvature to be shared is at least
\begin{align*}
\frac{k}{2} - \frac{1}{12} - (\frac{k}{3} + \frac{k}{24n} + \frac{k-1}{6} + \frac{k}{24n}) =
\frac{1}{12} - \frac{k}{12n} > 0     
\end{align*}
since $k<$ $n$

\item[Layer 1. Boundary vertices.] \hfill \\ 
Recall that every boundary vertex in K* is attached to exactly 2 boundary edges.
Let $v$ be a boundary vertex. We can assume that $d(v) > 2$,  otherwise, there are no interior face corners or interior vertices connected to $v$. We define a component of $v$ to be a maximal sequence of 2 or more inward edges or a maximal sequence of 2 or more outward edges. If either one of the boundary edges don’t belong to a component, we append it to the first or last component of $v$ as appropriate. If the first or last component consists of only two outward edges and the boundary edge is the first outward edge of the sequence, then we merge this first or last component with the following or previous component as appropriate. We do this to be sure that when the degree of $v$ is $> 3$, there is at least one interior uncoloured edge per component. For, if all of the interior edges of a component are inward then, by Lemma 2 b, at least one is uncoloured. If all of the interior edges are outward, there are at least two such edges and so the first of these edges is either a first or second outward edge and not last and is, therefore, uncoloured by the definition of yellow. If the interior edge set consists of a mixture of inward and outward edges then there is either a first or last inward edge and this is uncoloured by Lemma 2 a. 

The number of edges in a component is $\leq n + 2$ which, in turn, is clearly $ < 2n$.

We show that for each component we can share a curvature of
\begin{itemize} 
\item $1/3 + 1/24n$ to each attached interior face corner,
\item 1/6 to each 5 vertex attached by a coloured edge,
\item $1/24n$ to each attached interior vertex
\end{itemize}
We do this by calculating that the total curvature of the vertex is at least that of the curvature to be shared. 
\item[Case $v$ has one component] \hfill \\
Suppose that the degree of $v$ is 3. There are no interior face corners, at most one coloured edge and at most one attached interior vertex.
Then the total curvature of the vertex less the curvature to be shared is
\begin{align*}
 3/2 - 1 - (1/6 + 1/24n) =
1/3 - 1/24n > 0   
\end{align*}

If the degree of $v$ is $k>3$ then $v$ is attached to $k - 2 > 1$ interior edges.  The component has at least one uncoloured interior edge and, since the boundary edges are also uncoloured, there are at least 3 uncoloured edges and so at most $k - 3$ coloured edges attached to $v$. There are $k - 1$ face corners attached to $v$, but only $k - 3$ of them are interior face corners.
So the total curvature of the vertex less the curvature to be shared is at least
\begin{align*}
k/2 - 1 - (\frac{k-3}{3} + \frac{k-3}{24n}+ \frac{k-3}{6} + \frac{k-2}{24n}) =
1/2 - \frac{2k - 5}{24n} > 0      
\end{align*}
since $k < 2n$,

\item[Case $v$ has more than one component] \hfill \\
We share the -1 part of the initial curvature, $d(v)/2 - 1$, of $v$ as -1/2 each to the first and last components. Let the first component have $k$ edges. There are, then, $k - 1$ interior face corners up to the start of the next component, at most $k - 2$ coloured edges and at most $k - 1$ attached interior vertices.
Then the total curvature of the component less the curvature to be shared is at least
\begin{align*}
k/2 - 1/2 - (\frac{k-1}{3} + \frac{k-1}{24n} + \frac{k-2}{6} + \frac{k-1}{24n}) =
1/6 - \frac{2k-2}{24n} > 0     
\end{align*}
since $k < 2n$.

A similar calculation applies to the last component comprising $l$ edges say. Then there are only $l - 2$ interior face corners and so the total curvature less curvature to be shared amount would be 
\begin{align*}
1/2 - \frac{2l-3}{24n} > 0   
\end{align*}
since $l < 2n$.

Let a ‘middle’ component have $m$ edges then, there are $m$ attached interior face corners up to the start of the next component, at most $m - 1$ coloured edges and at most $m$ attached interior vertices.
Then the total curvature of the component less the curvature to be shared is at least
\begin{align*}
m/2 - (m/3 + \frac{m}{24n}+ \frac{m-1}{6} + \frac{m}{24n}) =
1/6 - \frac{2m}{24n} > 0    
\end{align*}
since $m < 2n$
\item[Layer 1. deg-7 vertices.] \hfill \\
We want to show, as in the boundary vertex case, that for any deg-7 vertex we can share a curvature of
\begin{itemize} 
\item $1/3 + 1/24n$ to each interior face corner,
\item 1/6 to each 5 vertex attached by a coloured edge,
\item $1/24n$ to each attached interior vertex
\end{itemize}
Let $v$ be a deg-7 vertex and so attached to at least 7 uncoloured edges. This time, we take a component to be a sequence of 2 or more inward edges followed be a sequence of 2 or more outward edges and assume, firstly, that $v$ is made up of $d$ components where $d > 1$.
\item[Case d $> 2$.] \hfill \\
We share the -1 part of the initial curvature of $v$ as $-1/d$ per component. Let there be $k$ edges in a particular component. There are $k$ attached interior face corners up to the next component. By Lemma 2 a, at least 3 edges are uncoloured in a component and so there are at most
$k - 3$ coloured edges. There are at most $k$ attached interior vertices. Then, per component, the amount of curvature of the component less the curvature to be shared is at least
\begin{align*}
\frac{k}{2} - \frac{1}{d} - (\frac{k}{3} + \frac{k}{24n} + \frac{k-3}{6} + \frac{k}{24n}) =
\frac{1}{2} - \frac{1}{d} - \frac{2k}{24n} \geq \frac{1}{6} - \frac{2k}{24n} \geq 0     
\end{align*}
since $d > 2$ and $k \leq 2n$.
\item[Case $d=2$.] \hfill \\
Let $k$ and $m$ be the number of edges of component 1 and component 2 respectively. Let uc1 be the number of uncoloured edges for component 1 and uc2 be the number of uncoloured edges for component 2.  Suppose uc1 is 3, then uc2 cannot also be 3 since $v$ is not one of the deg-6 vertex types. If uc1 is 3 and uc2 is 4, the only possibilities for $n \geq 11$ are  Figures~\ref{eight7} and~\ref{ten7} shown above. We can treat these as single component deg-7 vertices, where we define the component to be the whole vertex, and this case is dealt with in the ‘Case $d$ = 1’ below.
If uc1 is 3 and uc2 $ > $4, then the -1 part of the initial curvature of $v$ is shared out as -2/3 for component 2 and -1/3 for component 1. Then, per component, the amount of curvature available less the curvature to be shared is at least
\begin{align*}
 k/2 - 1/3 - (k/3 + \frac{k}{24n} + \frac{k-3}{6} + \frac{k}{24n}) =
1/6 - \frac{2k}{24n} \geq 0  
\end{align*}
since $k \leq 2n$, and
\begin{align*}
m/2 - 2/3 - (m/3 + \frac{m}{24n} + \frac{m-5}{6} + \frac{m}{24n}) =
1/6 - \frac{2m}{24n} \geq 0 
\end{align*}
since $m \leq 2n$.

If uc1 and uc2 are both $\geq 4$, the -1 part of the initial  curvature of $v$ is shared out as -1/2 for each component.  Then, for the first component, the amount of  curvature available less the  curvature to be shared is
\begin{align*}
k/2 - 1/2 - (k/3 + \frac{k}{24n} + \frac{k-4}{6} + \frac{k}{24n}) =
1/6 - \frac{2k}{24n} \geq 0 
\end{align*}
since $k \leq 2n$. 
And, similarly, for the second component
\item[Case $d = 1$.] \hfill \\
We now define component to be simply the whole vertex. Suppose the vertex has degree $k$. The amount of curvature available less the curvature to be shared is
\begin{align*}
k/2 - 1 - (k/3 + \frac{k}{24n} + \frac{k-7}{6}+ \frac{k}{24n}) =
1/6 - \frac{2k}{24n} \geq 0    
\end{align*}
since $k \leq 2n$
\item[Layer 2. GG, YB vertices] \hfill \\
Let $v$ be a GG or YB vertex of degree $k$. By Lemma 5, these types of vertices are attached by at least one edge to a boundary or deg-7 vertex. So, these vertices will receive at least $1/24n$ curvature from one or more of their attached vertices. This time, we want to show that $v$ has enough curvature so that we can share
\begin{itemize} 
\item $1/3 + 1/48n^2$ to each interior face corner,
\item 1/6 to each 5 vertex attached by a coloured edge,
\item $1/48n^2$ to each attached interior vertex
\end{itemize}
There are $k - 6$ coloured edges, at most $k$ interior face corners and at most $k$ interior vertices attached to $v$.  The amount of curvature available less the amount needed to be shared is therefore
\begin{align*}
\frac{1}{24n} + k/2 - 1 - (k/3 + \frac{k}{48n^2} + \frac{k-6}{6} + \frac{k}{48n^2}) =
\frac{1}{24n} - \frac{2k}{48n^2} > 0   
\end{align*}
since $k < n$. 
\item[Layer 3. Y+ vertices.] \hfill \\
Let $v$ be a Y+ vertex of degree $k$. By Lemma 5, $v$ is attached to either a GG, deg-7 or a boundary vertex. So, $v$ will receive at least $1/48n^2$  curvature from one or more of its attached vertices. This time we want to show that $v$ has enough curvature to share at least
\begin{itemize}
\item $1/3 + 1/96n^3$ to each interior face corner,
\item 1/6 curvature to each 5 vertex attached by a coloured edge
\item $1/96n^3$ to each attached interior vertex
\end{itemize}
As in the previous case, there are $k - 6$ coloured edges, at most $k$ interior face corners and at most $k$ interior vertices attached to $v$.  The amount of  curvature available less the amount needed to be shared is therefore at least
\begin{align*}
\frac{1}{48n^2} + k/2 - 1 - (k/3 + \frac{k}{96n^3} + \frac{k-6}{6} + \frac{k}{96n^3}) =
\frac{1}{48n^2} - \frac{2k}{96n^3} > 0    
\end{align*}
since $k < n$
\item[Layer 4. Y-, 5G vertices.] \hfill \\
Let $v$ be a Y- or 5G vertex of degree $k$. By Lemma 5, $v$ is attached to at least one vertex from a higher layer and so $v$ will receive at least $1/96n^3$  curvature from its attached vertices. This time we want to show that $v$ has enough curvature to share at least
\begin{itemize}
\item $1/3 + 1/192n^4$ to each interior face corner,
\item $1/6 + 1/192n^4$ curvature to each 5 vertex attached by a coloured edge
\end{itemize}
There are $k - 6$ coloured edges and at most $k$ interior face corners attached to $v$.  The amount of curvature available less the amount needed to be shared is therefore
\begin{align*}
\frac{1}{96n^3} + \frac{k}{2} - 1 - (\frac{k}{3} + \frac{k}{192n^4} + \frac{k-6}{6} + \frac{k-6}{192n^4}) =
\frac{1}{96n^3} - \frac{2k - 6}{192n^4} > 0 
\end{align*}
since $k \leq n$
\item[Layer 5. 5 vertices.] \hfill \\
By Lemma 1, a 5 vertex is attached by a coloured edge to a vertex from a higher layer. It therefore receives curvature of at least $1/6 + 1/192n^4$ from this vertex. Its curvature is, therefore, at least
\begin{align*}
 5/2 - 1 + 1/6 + 1/192n^4 = 5/3 + 5/960n^4  
\end{align*}
It is able, therefore to share this curvature to at most 5 attached interior face corners and give at least $1/3 + 1/960n^4$ to each corner.
\end{description}
All possible vertex types have been covered, and we have shown that the curvature of $E*$ $-$ $V*$ can be distributed to give each interior face corner in K* at least $1/3 + 1/960n^4$ of curvature. 

Since $E*$ $-$ $V*$ $\leq E-V$ and the number of interior face corners in K* and K are the same, the above statement is also true for the curvature $E-V$ and the interior face corners of K. 
\end{proof}

\begin{theorem}
$F(2, n)$ is hyperbolic for $n$ odd and $n \geq 11$.
\begin{proof}
Let K, $V, E, F, B, Area$ and $Length$ be defined as at the start of  section 3, above.  A total of at most $E - V$ curvature is distributed to the interior face corners of K. Because   $V - E + F = 1$, this is $F - 1$. On the other hand, by Proposition 6, the total amount of curvature distributed to the interior face corners is at least $X \times (1/3 + 1/960n^4$) where $X$ is the number of interior face corners. Since every face is a triangle, $X$ is 3 $\times$ the number of interior faces or $3  \times (F - B)$. So we have the inequality
\begin{equation*}
F - 1 \geq 3 \times (F - B) \times (1/3 + \epsilon/3) 
\end{equation*}
where $\epsilon = 1/320n^4$.
Collecting terms together and dividing by $\epsilon$ gives
\begin{equation*}
F \leq B \times (1 + 1/\epsilon) - 1/\epsilon    
\end{equation*}
Then, since $Area = F$, and $Length \geq B$, this gives
\begin{equation*}
Area \leq Length \times (1 + 1/\epsilon) - 1/\epsilon
\end{equation*}
This shows that $F(2,n)$ satisfies a \textit{linear isoperimetric inequality} relating $Area$ and $Length$. This is one of the ‘equivalent definitions’ of hyperbolicity (see, for example, Theorems 6.5.3 and 6.6.1 in \cite{hrr}) and so we deduce that $F(2,n)$ is hyperbolic.
\end{proof}
\end{theorem}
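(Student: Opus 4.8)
The plan is to convert the per-corner curvature surplus guaranteed by Proposition 6 into a global linear isoperimetric inequality for $K$, and then to invoke the standard equivalence between such an inequality and hyperbolicity. First I would recall that a van Kampen diagram is a planar, simply connected $2$-complex, so that Euler's formula gives $V - E + F = 1$. This caps the total curvature that can ever be handed out: the quantity being distributed is $E - V$, which equals $F - 1$. That is the \emph{upper} bound on the total curvature received across all interior face corners.

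For the matching \emph{lower} bound I would apply Proposition 6 directly. Each interior face corner receives at least $1/3 + 1/960n^4$ of curvature, so if $X$ denotes the number of interior face corners, the total distributed curvature is at least $X \times (1/3 + 1/960n^4)$. The essential combinatorial input is that every face of $K$ is a triangle, hence contributes exactly three corners; since the interior faces number $F - B$, where $B$ is the number of boundary faces, this gives the exact identity $X = 3(F - B)$. Writing $\epsilon = 1/320n^4$, so that $1/3 + 1/960n^4 = (1 + \epsilon)/3$, the upper and lower bounds combine into
\begin{equation*}
F - 1 \geq 3(F - B) \times \frac{1 + \epsilon}{3} = (1 + \epsilon)(F - B).
\end{equation*}

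Next I would solve this for $F$. Collecting the $F$ terms gives $\epsilon F \leq (1 + \epsilon) B - 1$, and dividing through by $\epsilon$ yields $F \leq (1 + 1/\epsilon) B - 1/\epsilon$. Since $Area = F$ and $Length \geq B$ by the definitions of Section 3, substituting produces the affine bound $Area \leq (1 + 1/\epsilon)\,Length - 1/\epsilon$, a linear isoperimetric inequality with finite constant $1 + 320n^4$. Finally I would cite the standard equivalence (see \cite{hrr}) that a finitely presented group admitting a linear isoperimetric inequality is hyperbolic, which closes the argument.

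The step I expect to carry the real weight is not any of the manipulations above—they are routine once Proposition 6 is in hand—but rather the necessity of the \emph{strict} surplus $1/960n^4$ beyond the threshold $1/3$ per corner. Were each corner to receive exactly $1/3$, the two bounds would collapse to $F - 1 \geq F - B$, that is $B \geq 1$, which is vacuous and yields no control on $F$ whatsoever. It is precisely the positive $\epsilon$ that produces the finite coefficient $1/\epsilon$ and hence a genuine linear bound; in this sense the entire force of Lemmas 1--5 and Proposition 6 is funnelled into the single fact that $\epsilon > 0$. The only care required in the present theorem is bookkeeping: verifying that the triangulation identity $X = 3(F - B)$ is exact and that $Length \geq B$, both of which follow immediately from the definitions.
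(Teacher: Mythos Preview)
Your proposal is correct and follows essentially the same route as the paper: Euler's formula to bound total curvature by $F-1$, Proposition~6 and the triangulation identity $X=3(F-B)$ for the lower bound, the same substitution $\epsilon=1/320n^4$ and algebraic rearrangement to $F\leq(1+1/\epsilon)B-1/\epsilon$, and then $Area=F$, $Length\geq B$ plus the standard linear-isoperimetric characterisation of hyperbolicity. Your additional paragraph explaining why the strict surplus $\epsilon>0$ is indispensable is a nice piece of exposition not present in the paper, but the proof itself is the same.
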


\noindent \textbf{Acknowledgements} 
I would like to thank Martin Edjvet for his careful reading of an early draft of this paper and providing many constructive comments for improvement. I would also like to thank Caitlin for her generous help and advice.

\bibliography{references.bib}

\end{document}